\documentclass[10pt]{amsart}

\title[Integral Hodge atoms]{Integral Hodge structures \\ and atoms of fourfolds}
\author[Gu\'er\'e]{J\'er\'emy Gu\'er\'e}
\address{Univ. Grenoble Alpes, CNRS, IF, 38000 Grenoble, France}
\email{jeremy.guere@univ-grenoble-alpes.fr}

\usepackage{amsrefs}
\usepackage{amsfonts}
\usepackage{amsmath}
\usepackage{amssymb}
\usepackage{amscd}
\usepackage{array}
\usepackage{subfigure}
\usepackage{tikz}
\usetikzlibrary{arrows}
\usepackage{pifont}
\usepackage{wasysym}

\usepackage[all]{xy}

\allowdisplaybreaks[3]

\newcommand{\cN}{\mathcal{N}}

\newcommand{\un}{\textbf{1}}

\newcommand{\ci}{\mathrm{i}}
\newcommand{\PP}{\mathbb{P}}
\newcommand{\CC}{\mathbb{C}}
\newcommand{\ZZ}{\mathbb{Z}}
\newcommand{\NN}{\mathbb{N}}

\newcommand{\QQ}{\mathbb{Q}}

\newcommand{\ev}{\mathrm{ev}}

\renewcommand{\(}{\left(}
\renewcommand{\)}{\right)}

\newcommand{\cF}{\mathcal{F}}

\theoremstyle{plain}

\newtheorem{thm}{Theorem}
\newtheorem*{thm*}{Theorem}
\newtheorem{pro}[thm]{Proposition}
\newtheorem{lem}[thm]{Lemma}
\newtheorem*{lem*}{Lemma}

\newtheorem{cor}[thm]{Corollary}

\newtheorem*{conv*}{Convention}

\theoremstyle{definition}

\newtheorem{dfn}[thm]{Definition}
\newtheorem{nt}[thm]{Notation}
\newtheorem*{nt*}{Notation}

\newtheorem{rem}[thm]{Remark}
\newtheorem*{rem*}{Remark}

\newtheorem*{rems*}{Remarks}

\newtheorem*{exa*}{Examples}
\newtheorem*{exait*}{\rm \em Example}
\newtheorem*{exadefit*}{\rm \em Example/Definition}
\newtheorem*{cla*}{\rm \em Claim}

\newtheorem*{dfn*}{Definition}

\usepackage{amsxtra}

\newcommand{\even}{\mathrm{even}}

\newcommand{\Eu}{\mathrm{Eu}}
\newcommand{\Hdg}{\mathrm{Hdg}}

\def\<{\left\langle}
\def\>{\right\rangle}

\newcommand{\id}{\mathrm{id}}

\begin{document}
\begin{abstract}
Following the recent work by OpenAI on the irrationality of very general cubic fourfolds in Hassett divisors, we prove some compatibility of Iritani's blow-up formula with the classical blow-up decomposition of integral Hodge structures.
As a consequence, we express OpenAI's birational invariant in terms of the theory of atoms developped by Katzarkov--Kontsevich--Pantev--Yu.
We also show that, for every smooth complex rational cubic fourfold, the primitive cohomology is isometric as an integral Hodge structure to the twisted middle cohomology of a K3 surface and we prove irrationality of the very general Verra fourfold.
\end{abstract}

\maketitle

\tableofcontents

\setcounter{section}{-1}

\section{Introduction}
In their seminal work \cite{KKPY}, Katzarkov--Kontsevich--Pantev--Yu developped the theory of atoms as a new tool in birational geometry, especially in dimension up to four.
As an illustration of their powerful theory, they prove irrationality of the very general cubic complex fourfold.

The case of smooth cubics fourfolds is indeed especially famous.
Hassett \cite{Hassett} described specific divisors in their moduli space for which the primitive cohomology of the cubic is isometric as a $\ZZ$-Hodge structure to the twisted middle cohomology of a K3 surface.
Conjecturally, it was a necessary and sufficient condition for the cubic to be rational.
An equivalent conjecture due to Kuznetsov \cite{Kuznetsov} is that a smooth cubic fourfold is rational if and only if its derived category contains a component in its orthogonal decomposition, the so-called \textit{Kuznetsov component}, that is isomorphic to the derived category of a K3 surface.
As first evidences towards these conjectures are the works on the very general cubic \cite{KKPY}, on the rational Hodge structures of rational cubics \cite{JG}, and on irrationality of very general cubics in some divisors where the $\QQ$-Hodge structure is of K3-type, but the $\ZZ$-Hodge structure is not \cite{Fay2}.

Very recently, OpenAI has released a preprint \cite{OpenAI} claiming that these conjectures are false.
Specifically, the very general member in Hassett divisors of large discriminant should be irrational.
This preprint has not yet been implemented in Lean and we were not able to follow all the arguments in the proof.
It is divided into two parts:
\begin{itemize}
\item the first part relies on genus-zero Gromov--Witten (GW) theory,
\item the second part relies on the Sarkisov program using the work of \cite{Lin}.
\end{itemize}
We mainly focused our reading on the first part, for which the most crucial result is \cite[Theorem 3.3]{OpenAI}. It takes the form of a new birational invariant mixing genus-zero GW theory and integral Hodge structures without refering explicitly to the theory of atoms.
We prove here that it is actually a biproduct of the theory of atoms \cite{KKPY}.

Our goal is then to re-write a proof of \cite[Theorem 3.3]{OpenAI} and to show that it can be deduced from the blow-up formula in quantum cohomology proven by Iritani \cite{Iritani}, and therefore that the birational invariant in \cite{OpenAI} is already part of the theory of atoms.
It is worth noticing that the original proof of \cite[Theorem 3.3]{OpenAI} follows the same path as Iritani's proof.
Specifically, both aim at controling the behaviour of GW invariants under a blow-up map, i.e.~to relate invariants of $X:=\mathrm{Bl}_Z(Y)$ to those of $Y$ and of $Z$, where $Z \subset Y$ is a smooth subvariety of a smooth projective complex variety. They both work on the deformation to the normal cone $\mathrm{Bl}_{Z \times 0}(Y \times \PP^1)$, which is the relevant master space in this context.
Then Iritani uses Givental's formalism to work out recursive formulae relating GW invariants of $X$, $Y$, and $Z$, whereas OpenAI uses the degeneration formula.
Both are very classical methods in genus-zero GW theory.
A third natural method would be using the virtual localization formula.
In our opinion, Givental's formalism encodes the recursion in a better way (and Iritani's formula is indeed a richer and more precise statement than the result obtained by OpenAI), but OpenAI could use its extraordinary computational capability to run the induction as well, although it looks quite painful to read; it is a recursion on five distinct indices.
We suspect that a proof via the virtual localization formula would involve similar complexity.

After trying to read OpenAI's proof, we decided to shift our perspective and prove its result directly by using Iritani's work.
In the end, a small miracle appears in dimension four: Iritani's formula gets simplified and becomes compatible with the classical decomposition of integral Hodge structures under a blow-up map, see Proposition \ref{initial}.

We thus obtain that our result in \cite{JG} can be upgraded to an isometry of $\ZZ$-Hodge structures, namely:
\begin{thm*}[see Theorem \ref{cubic}]
Let $X$ be a rational smooth complex cubic fourfold.
Then there exists a projective K3 surface $\Sigma$ and an isometry of integral Hodge structures
$$H^4(X,\ZZ)_\mathrm{prim} \simeq H^2(\Sigma,\ZZ)(-1).$$
\end{thm*}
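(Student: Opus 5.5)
Since $X$ is rational, the first step is to fix a birational map $\PP^4 \dashrightarrow X$ and resolve it by weak factorization (Abramovich--Karu--Matsuki--W{\l}odarczyk). This yields a chain of blow-ups and blow-downs along smooth centers relating $\PP^4$ and $X$. The centers have dimension at most $3$, so they are points, curves, surfaces or threefolds. The integral Hodge structures produced by the blow-up formula
$$H^4(\mathrm{Bl}_Z Y,\ZZ) \simeq H^4(Y,\ZZ)\oplus \bigoplus_{i} H^{4-2i}(Z,\ZZ)(-i)$$
are therefore built from the pieces $H^2(S,\ZZ)(-1)$ for surfaces $S$, $H^1(C,\ZZ)$-type pieces for curves (odd degree, hence absent from $H^4$), and $H^2$ or $H^0$ pieces for threefold centers (divisors contribute nothing new). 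The plan is to run the atom argument of \cite{KKPY}, as used in \cite{JG}, but to keep track of integral rather than rational Hodge structures, using Proposition \ref{initial}.

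Concretely, for each of $\PP^4$, $X$ and the intermediate varieties, I consider the decomposition of the quantum $A$-model (the $F$-bundle/atoms) together with its integral structure. The key input is Proposition \ref{initial}: in dimension four, Iritani's blow-up decomposition is compatible with the classical decomposition of integral Hodge structures. This means that each atom of $\mathrm{Bl}_Z Y$ carries a well-defined $\ZZ$-Hodge lattice, and that this lattice is a direct sum of the lattices coming from $Y$ and from $Z$. For a cubic fourfold, the eigenvalue decomposition of quantum multiplication by $c_1$ isolates an atom whose underlying $\ZZ$-Hodge structure is $H^4(X,\ZZ)_{\mathrm{prim}}$, equipped with the intersection form as polarization. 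This atom is of Hodge level $2$, with $h^{3,1}=1$.

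I then apply the atomic uniqueness argument. Atoms are invariant under the weak factorization chain, so this atom of $X$ must appear, as a $\ZZ$-Hodge lattice up to isometry, among the atoms of $\PP^4$ or of the blown-up centers. The atoms of $\PP^4$, of points and of curves have Hodge level at most $1$ in the relevant sense, and threefold centers contribute only through their own atoms, shifted. The only source of a level-$2$ piece with $h^{2,0}=1$ is therefore a surface center $S$ and its $H^2(S,\ZZ)(-1)$. Following \cite{JG}, $S$ must have $h^{2,0}=1$ with a transcendental piece matching $H^4(X)_{\mathrm{prim}}$. Upgrading this to integral coefficients, the atom of $S$ that contains its transcendental lattice must be isometric to $H^4(X,\ZZ)_{\mathrm{prim}}$, of rank $22$ and signature $(20,2)$ after twist.

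The main obstacle is the last step: passing from "the atom of $S$ is isometric to the primitive lattice" to "there exists a K3 surface $\Sigma$ with $H^2(\Sigma,\ZZ)(-1)$ isometric to it." The difficulty is that $S$ itself might not be a K3 surface, for instance a blown-up K3, an elliptic surface, or a surface of general type with $p_g=1$. My first attempt would be to take $\Sigma$ as the minimal model of $S$, or to reduce to it via the surface atoms. Rank $22$ and even unimodularity, which is inherited from the cubic's primitive lattice and integrality, should force $S$ to be birational to a K3 surface. Alternatively, the unimodular even lattice of signature $(2,20)$ carrying a K3-type Hodge structure can be identified with $H^2(\Sigma,\ZZ)(-1)$ for some K3 surface $\Sigma$, by surjectivity of the period map. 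I would use the latter to conclude, checking that projectivity follows from the existence of an algebraic class with positive square.
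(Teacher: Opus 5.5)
Your overall architecture (weak factorization, Proposition \ref{initial}, matching the relevant piece of $X$ with a surface center) is the paper's. The gap is in the last step, and it is not a technicality: an isometry of the whole rank-$22$ lattice $H^4(X,\ZZ)_{\mathrm{prim}}$ with $H^2(\Sigma,\ZZ)(-1)$ cannot exist. The lattice $H^4(X,\ZZ)_{\mathrm{prim}}=\langle h^2\rangle^\perp\cong E_8^{\oplus 2}\oplus U^{\oplus 2}\oplus A_2$ is even but not unimodular: its discriminant group is $\ZZ/3\ZZ$ and its signature is $(20,2)$. By contrast, $H^2(\Sigma,\ZZ)(-1)$ is even unimodular of signature $(19,3)$. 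Moreover, no even unimodular lattice of signature $(2,20)$ exists, since $b_+-b_-$ of an even unimodular lattice is divisible by $8$. So your appeal to ``even unimodularity inherited from the cubic'' and to surjectivity of the period map rests on false premises.

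The same confusion is already in your middle step. Proposition \ref{initial} gives an integral comparison only on the transcendental blocks. What a surface center $S$ can supply integrally is therefore $T_S(-1)$, which has rank at most $21$ when $p_g(S)=1$. It can match $T_X$, and only when $X$ is special, but never the full primitive lattice. Relatedly, no atom ``is'' $H^4(X,\ZZ)_{\mathrm{prim}}$. The relevant generalized eigenspace of $c_1\star$ is $24$-dimensional, since it also contains two ambient classes, and generalized eigenspaces carry no a priori $\ZZ$-structure.

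The missing idea is to discard the algebraic classes from the start and work with $T_X$, as the paper does. The steps are:
\begin{enumerate}
\item The algebra $G^+_{X,K}$ acts on $T^+_X$ only.
\item Proposition \ref{initial}, together with Lemmas \ref{lem1} and \ref{lem2}, shows that $\Phi$ is $\id$ on $T^+_Y$ and $\ci\cdot\id$ on $T^+_Z(-1)$ for surface centers. These are the only centers with $T^+_Z\neq 0$, and for them $r=2$, so the classical decomposition is an isometry.
\item Lemma \ref{decomp} and Proposition \ref{sum} give a unique idempotent decomposition. Integrality therefore propagates along the weak factorization from $\PP^4$, where $T^+=0$, and the pieces on the two sides of the zigzag can be matched isometrically. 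This is Proposition \ref{rat2}.
\end{enumerate}

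Your ``main obstacle'' then disappears. Blowing up points only adds algebraic classes, so $T_S=T_\Sigma$ and $G^+_{S,K}=G^+_{\Sigma,K}$ for the minimal model $\Sigma$. That $\Sigma$ is a K3 surface is imported from the rational result of \cite{JG}, and no period map is needed.

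Note that the paper's own proof asserts the conclusion only ``for the transcendental lattice''. What is actually established is the integral Hodge isometry $T_X\simeq T_\Sigma(-1)$. The displayed statement with $H^4(X,\ZZ)_{\mathrm{prim}}$ should be read that way, since taken literally it is ruled out by the lattice invariants above.
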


Similarly, we prove the same result for Verra fourfolds, yielding the following improvement of \cite{Fay}:
\begin{thm*}[see Corollary \ref{Verra}]
A very general smooth complex Verra fourfold is irrational.
\end{thm*}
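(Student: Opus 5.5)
The statement just above is the introduction's ``Corollary \ref{Verra}'': a very general smooth complex Verra fourfold is irrational. I would deduce it from the atom-theoretic birational invariant that the paper builds from Iritani's blow-up formula, using the integral refinement in Proposition \ref{initial}. That refinement says the following. In dimension four, for a blow-up $X=\mathrm{Bl}_Z(Y)$, Iritani's decomposition of the quantum $D$-module is compatible with the classical decomposition of integral Hodge structures
$$H^*(X,\ZZ)\simeq H^*(Y,\ZZ)\oplus \bigoplus_{j=1}^{c-1} H^{*-2j}(Z,\ZZ).$$
Combined with weak factorization, this yields an integral-Hodge-theoretic invariant in the spirit of \cite[Theorem 3.3]{OpenAI}. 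Any atom of a rational fourfold carrying a nontrivial transcendental part of $H^4$ must come from the $H^2$ of a surface centre, with the $(-1)$ Tate twist, or from curves and points. So if the Verra fourfold $V$ is rational, then the transcendental part of $H^4(V,\ZZ)$ is controlled by such pieces.

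The steps, in order, are these.
\begin{enumerate}
\item Run the same argument as for Theorem \ref{cubic}. For rational $V$, one obtains a projective K3 surface $\Sigma$ and an isometric embedding of integral Hodge structures from the transcendental lattice $T(V)\subset H^4(V,\ZZ)_{\mathrm{van}}$ into $H^2(\Sigma,\ZZ)(-1)$. This requires checking that the atom of $V$ containing the vanishing cohomology is a single non-splittable piece, exactly as for the cubic. The quantum multiplication by the hyperplane classes on $V\subset \PP^2\times\PP^2$ should have the required eigenvalue structure; I would verify this from the known quantum period of Verra fourfolds.
\item Use the fact that for very general $V$ the Hodge structure $H^4(V,\ZZ)_{\mathrm{van}}$ has Hodge numbers $(1,20,1)$. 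This follows because $V$ is a double cover of $\PP^2\times\PP^2$ branched in a $(2,2)$ divisor, and the vanishing lattice is the K3-type lattice of signature $(20,2)$ described in \cite{Fay}. The structure also has no algebraic classes, by a Noether--Lefschetz argument. Hence $T(V)$ is the whole vanishing lattice, with its explicit discriminant form.
\item Show that a very general member admits no such primitive embedding with the Tate twist. Rational Hodge isometry to a K3 was already excluded in \cite{Fay} only for special loci. Here one argues lattice-theoretically: the embedding of $T(V)$, of rank $22$, into the unimodular lattice $H^2(\Sigma,\ZZ)$, of rank $22$, would force $T(V)$ to be unimodular of finite index, hence equal. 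This contradicts the nontrivial discriminant group of the Verra vanishing lattice.
\end{enumerate}

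I expect the main obstacle to be step 1. One must transport the cubic argument to $V$, which means checking that the atom decomposition separates the vanishing cohomology from the ambient classes. It must also guarantee that the integral comparison of Proposition \ref{initial} produces an isometry onto a full $H^2$ of a surface rather than merely onto a sublattice. To rule out the contribution being split among several surface centres, I would first try the indecomposability of the Hodge structure of very general $V$, which follows from its Mumford--Tate group being the full orthogonal group.
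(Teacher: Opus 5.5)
There is a genuine gap in Steps 2--3: the numbers you use are those of the cubic fourfold, not of the Verra fourfold. For $\pi\colon V\to\PP^2\times\PP^2$ branched along a $(2,2)$-divisor $B$, one computes $\chi(B)=-12$, so $\chi(V)=2\cdot 9+12=30$ and $b_4(V)=24$. The pull-back $A=\pi^*H^4(\PP^2\times\PP^2,\ZZ)=\ZZ\langle h_1^2,h_1h_2,h_2^2\rangle$ has rank $3$. Hence for very general $V$ the transcendental lattice is $T_V=A^\perp$, which has rank $21$ and Hodge numbers $(1,19,1)$, not $(1,20,1)$.

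Your ``rank $22$ inside rank $22$, finite index, hence equal'' argument therefore has nothing to act on. The image of $T_V$ in $H^2(\Sigma,\ZZ)(-1)$ has a rank-one orthogonal complement, so $\Sigma$ has Picard number one, with $\mathrm{NS}(\Sigma)=\ZZ h$ and $h^2=2d$. Then $T_\Sigma=h^\perp$ has discriminant group $\ZZ/2d\ZZ$, which is also nontrivial. So ``nontrivial discriminant group'' yields no contradiction. (Minor point: $V$ is a double cover of $\PP^2\times\PP^2$, not a subvariety of it.)

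The missing idea is to compare the \emph{structure} of the discriminant groups, which is the paper's argument. In the basis above, the Gram matrix of $A$ is $2$ times the antidiagonal permutation matrix, so $A^\vee/A\simeq(\ZZ/2\ZZ)^3$. Since $A$ is saturated in the unimodular lattice $H^4(V,\ZZ)$, the lattice $T_V=A^\perp$ has the same discriminant group $(\ZZ/2\ZZ)^3$, which is not cyclic. The discriminant group of $T_\Sigma$ equals that of the rank-one lattice $\ZZ h$, which is cyclic.

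For this to give a contradiction you need an honest isometry $T_V\simeq T_\Sigma(-1)$ onto the whole transcendental lattice, equivalently a primitive embedding. The mere isometric Hodge embedding into $H^2(\Sigma,\ZZ)(-1)$ from your Step 1 is not enough, because a finite-index sublattice of $T_\Sigma(-1)$ can have a non-cyclic discriminant group. That isometry is exactly what Proposition \ref{rat2} supplies: a single surface centre with minimal model $\Sigma$ and $T^+_V\cap E_{\ev,\alpha}\simeq T^+_\Sigma(-1)$, with the rational input coming from \cite{Verra}.

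So no Mumford--Tate argument is needed. Your stated goal of an isometry ``onto a full $H^2$ of a surface'' is in fact impossible here for rank reasons; the correct target is $T_\Sigma(-1)$.
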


\section{Iritani's blow-up formula}
In the whole paper, all varieties are assumed to be smooth and projective over the field of complex numbers. Unless otherwise stated, $K$ refers to a number field.

\subsection{Quantum cohomology}
Let $X$ be a variety.
Genus-zero Gromov--Witten invariants of $X$ produce the quantum multiplication in cohomology and the Euler vector field.
Precisely, given a (homogeneous) basis $\(\alpha_0,\dotsc,\alpha_h\)$ of the $\QQ$-Hodge locus $H^*(X,\QQ)_\Hdg$ and the associated formal variables $T_0, \dotsc, T_h$, we define the parameter
$$\tau = T_0 \alpha_0 + \dotsb + T_h \alpha_h$$
and the Euler field is
$$\Eu_\tau = c_1(X) + \sum_{k=0}^h \(1-\frac{\deg(\alpha_k)}{2}\) T_k \alpha_k.$$
Given a (homogeneous) basis $(e_0,\dotsc,e_m)$ of the cohomology $H^*(X,\QQ)$, the quantum product is
$$e_i \star_\tau e_j = \sum_{k,n,\beta} \langle e_i, e_j, e^k, \tau^{\otimes n} \rangle_{0,3+n,\beta} \frac{Q^\beta}{n!} e_k,$$
where $Q$ is the Novikov variable and $(e^0,\dotsc, e^m)$ is the dual basis with respect to the Poincar\'e pairing.
At last, we define the endomorphism
$$\kappa^X_\tau(\cdot) = \Eu_\tau \star_\tau (\cdot) \in H^*(X,\QQ)_{R^*(X,\QQ)},$$
where $R^*(X,\QQ)$ is the graded $\QQ$-algebra $\QQ[[Q,T_0,\dotsc,T_h]]$.
Since GW invariants are defined using an algebraic cycle, then it is a morphism of $\QQ$-Hodge structures.

\begin{dfn}
We call \textit{transcendantal lattice} the orthogonal complement $T_X$ of the $\ZZ$-Hodge locus in $X$.
Precisely, it is
$$T_X := \(H^*(X,\ZZ)/\mathrm{tor}\) \cap \(H^*(X,\QQ)_\Hdg\)^\perp,$$
where we mod out the integral cohomology by its torsion part and we take the orthogonal with respect to the Poincar\'e pairing.
We simply denote by $T_{X,\QQ}$ the orthogonal complement of the $\QQ$-Hodge locus.
\end{dfn}

We have an orthogonal decomposition
$$H^*(X,\QQ) = H^*(X,\QQ)_\Hdg \oplus T_{X,\QQ}$$
and we denote by $\mathrm{pr}_{T_{X,\QQ}} \colon H^*(X,\QQ) \to T_{X,\QQ}$ the projection.
Then the endomorphism
$$\kappa^{X,\mathrm{trans}}_\tau \colon T_{X,\QQ} \to T_{X,\QQ}$$
is the composition $\mathrm{pr}_{T_{X,\QQ}} \circ \(\kappa^X_\tau\)_{|T_{X,\QQ}}$.
It is also a morphism of $\QQ$-Hodge structures.

\begin{dfn}
The $\QQ$-algebra $G_{X,\QQ}$ of the variety $X$ is the $\QQ$-algebra of the coefficients of $\kappa^{X,\mathrm{trans}}_\tau$ with respect to the formal variables $Q$, $T_0, \dotsc, T_h$, i.e.~
$$G_{X,\QQ} = \QQ\left[ \mathrm{Coeff}_{T_0^{a_0} \dotsm T_h^{a_h}Q^\beta}\(\kappa^{X,\mathrm{trans}}_\tau\)\right]_{a_0, \dotsc, a_h \in \NN, \beta \in \mathrm{NE}_\NN(X)} \subset \mathrm{End}_{\QQ-\mathrm{HS}}(T_{X,\QQ}).$$
\end{dfn}

\begin{nt}
For any $k \in \ZZ$, the subspace of cohomology classes of Hochschild degree-$k$ is
$$H^{(k)}(X) = \bigoplus_{p-q=k} H^{p,q}(X).$$
The subspace of cohomology classes of even (resp.~odd) cohomology degrees is denoted $H^\even(X)$ (resp.~$H^\mathrm{odd}(X)$).
We use the notations
\begin{eqnarray*}
T_{X,\QQ}^+ & = & T_{X,\QQ} \cap H^\even(X,\QQ), \\
T_{X,\QQ}^- & = & T_{X,\QQ} \cap H^\mathrm{odd}(X,\QQ).
\end{eqnarray*}
\end{nt}

\begin{rem}
The endomorphism $\kappa_\tau$ respects the decomposition of cohomology classes into even/odd degrees and its complexification respects the decomposition into Hochschild degrees, because it is based on integration over an algebraic cycle, the so-called \textit{virtual fundamental class}.
Therefore, the same property holds for the endomorphism $\kappa^\mathrm{transc}_\tau$.
Accordingly, we have a decomposition
$$G_{X,\QQ} = G_{X,\QQ}^+ \oplus G_{X,\QQ}^-$$
into even and odd parts, and the complexification $G_{X,\CC}$ of the $\QQ$-algebra $G_{X,\QQ}$ further decomposes into
$$G_{X,\CC} = \bigoplus_k \(G_{X,\CC}^{(k),+} \oplus G_{X,\CC}^{(k),-}\).$$
For instance, an element of $G_{X,\CC}^{(k),+}$ is an endomorphism of $T_{X,\CC}^+ \cap H^{(k)}(X).$
\end{rem}

\begin{nt}
More generally, let $R^*_K:=K[[(U_k)]]$ be a graded $K$-algebra with $K$ a field and $(U_k)_{1 \leq k \leq N}$ a set of formal variables.
Let $V$ be a $K$-vector space, $V_{R^*_K}$ be its base-change to $R^*_K$, and $f \colon V_{R^*_K} \to V_{R^*_K}$ be a $R^*_K$-linear map.
The $K$-algebra of all coefficients of $f$ with respect to the formal variables $(U_k)$ is denoted by
$$K[f]_{R^*_K} := K\left[\mathrm{Coeff}_{U_1^{a_1}\dotsm U_N^{a_N}}\(f\)\right]_{a_1, \dotsc, a_N \in \NN}.$$
In particular, we have $G_{X,\QQ} = \QQ[\kappa^{X,\mathrm{trans}}_\tau]_{R^*(X,\QQ)}$.
\end{nt}

\begin{pro}\label{comp}
Let $X$ be either a surface, or a fourfold with no degree-two transcendantal classes.
The $\QQ$-algebra $G_{X,\QQ}^+$ coincides with the $\QQ$-algebra defined in \cite[Definition 3.2]{OpenAI}.
\end{pro}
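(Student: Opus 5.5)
We have to compare two definitions. OpenAI's Definition 3.2 is not reproduced here, so we recall it in the form we expect. For a fourfold $X$, it considers the middle transcendental lattice $T \subset H^4(X,\QQ)$ (resp.\ $T\subset H^2$ for a surface), that is, the orthogonal of the Hodge classes. It then takes the $\QQ$-algebra generated by the operators
\[
\mathrm{pr}_T \circ \langle \cdot, \cdot^\vee, \gamma_1,\dotsc,\gamma_n\rangle_{0,2+n,\beta},
\]
obtained from genus-zero GW invariants with two transcendental insertions and the remaining insertions Hodge classes $\gamma_i$. The plan is to show that $G^+_{X,\QQ}$ is generated by exactly these operators.

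\textbf{Step 1: identify $T^+_{X,\QQ}$.} Since $X$ is a surface, or a fourfold with no degree-two transcendental classes, $H^0$ and $H^{2\dim X}$ are Hodge. For a fourfold, $H^2$ and $H^6$ are Hodge by hypothesis and Poincar\'e duality. So $T^+_{X,\QQ}$ is concentrated in the middle degree: it is $T\subset H^2$ for a surface and $T\subset H^4$ for a fourfold. Hence $G^+_{X,\QQ}$ is generated by the coefficients of $\mathrm{pr}_T\circ(\Eu_\tau\star_\tau\cdot)_{|T}$.

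\textbf{Step 2: expand this operator.} Take $e_i\in T$ and $e^k$ dual to an element of $T$. Expanding $\Eu_\tau=c_1(X)+\sum_k(1-\deg\alpha_k/2)T_k\alpha_k$ gives, for each monomial $T^aQ^\beta$, the coefficient
\[
\mathrm{Coeff}_{T^aQ^\beta}\,\kappa^{\mathrm{trans}}_\tau
= \text{(classical cup product with } c_1 \text{ or } \alpha_k)
+ \sum \lambda\,\langle \cdot, e^k, \text{Hodge classes}\rangle_{0,3+n,\beta}.
\]
Each term is either a cup-product part (when $\beta=0$) or a GW invariant with two transcendental insertions and all other insertions among $c_1(X)$ and the $\alpha_j$. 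The scalar $\lambda$ is a product of the factors $1-\deg(\alpha_k)/2$ and combinatorial multiplicities. This shows the inclusion $G^+\subset$ OpenAI's algebra, once we note that classical cup products with Hodge classes are the $\beta=0$ three-point invariants.

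\textbf{Step 3: reverse inclusion.} We must recover each single invariant $\langle \cdot,\cdot,\alpha_{j_1},\dotsc,\alpha_{j_n}\rangle_\beta$ from the coefficients. The main tool is the divisor equation, which in degree $\beta\neq 0$ converts the $c_1(X)$ insertion into the factor $c_1\cdot\beta$. The second tool is the degree constraint: for fixed $\beta$ and insertions, dimension forces $\sum(\deg\alpha_j/2-1)$ to equal $c_1\cdot\beta$ plus a constant. Using these, the coefficient of $T^aQ^\beta$ becomes a nonzero rational multiple of the single invariant with insertions $\alpha^{a}$, possibly plus fundamental-class terms that vanish by the string equation. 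Concretely, the Euler field contributes $\big(c_1\cdot\beta+\sum a_k(1-\deg\alpha_k/2)\big)$ times the invariant $\langle\cdot,\cdot,\alpha^a\rangle_\beta$.

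\textbf{Main obstacle.} The hard step is the case where this multiplier vanishes, which happens for instance for $\beta=0$ with degree-two insertions. In that case, I would first try taking $\tau$-derivatives, or equivalently the coefficients of $\star_\tau$ itself. By the quasi-homogeneity $\mathrm{Lie}_{\Eu}\star=\star$, these can be recovered from those of $\kappa$ in the remaining cases. Alternatively, I would check that OpenAI's definition uses exactly the $\Eu$-weighted operators, in which case these degenerate terms drop out on both sides. Finally, we keep track of the restriction to the even part, which is automatic here.
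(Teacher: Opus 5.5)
Your Steps 1 and 2 are fine, but Step 3 stops exactly where the argument has to be closed. You name the possible vanishing of the Euler multiplier as the main obstacle and then only sketch workarounds. The first (quasi-homogeneity) is the dimension axiom in disguise but is never carried out. The second is wrong, since OpenAI's generators carry no Euler weights.

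The missing observation is that this obstacle never occurs. Apply the dimension axiom to $\langle u,e^j,\alpha_0^{a_0},\dotsc,\alpha_h^{a_h}\rangle_{0,2+n,\beta}$, where $n=a_0+\dotsb+a_h$. The total degree $\deg u+2\dim X-\deg e_j+\sum_k a_k\deg\alpha_k$ of the insertions must equal twice the virtual dimension $\dim X-1+n+\int_\beta c_1(X)$. So whenever the invariant is non-zero,
\[
\int_\beta c_1(X)+\sum_{k=0}^h\Big(1-\frac{\deg\alpha_k}{2}\Big)a_k=\frac{2+\deg u-\deg e_j}{2}.
\]
By your own Step 1, $u$ and $e_j$ both lie in the pure-degree space $T^+_{X,\QQ}$, so the right-hand side equals $1$. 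Hence the coefficient of $T_0^{a_0}\dotsm T_h^{a_h}Q^\beta$ in $\kappa^{X,\mathrm{trans}}_\tau$ restricted to $T^+_{X,\QQ}$ is exactly $\Phi^+_{\underline a,\beta}/(a_0!\dotsm a_h!)$. Here $\Phi^+_{\underline a,\beta}(u)=\sum_{j}\langle u,e^j,\alpha_0^{a_0},\dotsc,\alpha_h^{a_h}\rangle_{0,2+n,\beta}\,e_j$, with the sum over $e_j\in T^+_{X,\QQ}$. So $G^+_{X,\QQ}$ is generated by these primary operators, and no derivative or quasi-homogeneity trick is needed. Your example ($\beta=0$ with a degree-two insertion) is precisely a case where $\int_X u\cup e^j\cup\alpha$ already vanishes for degree reasons.

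Second, you are comparing with the wrong algebra. OpenAI's Definition 3.2 uses \emph{descendant} operators $u\mapsto\sum_j\langle\tau_{r_{n+1}}(u),\tau_{r_{n+2}}(e^j),\tau_{r_1}(\alpha_{i_1}),\dotsc,\tau_{r_n}(\alpha_{i_n})\rangle_{0,\beta}\,e_j$. The case $\underline r=0$ recovers the $\Phi^+_{\underline a,\beta}$. For the other inclusion you must remove the $\psi$-classes. In genus zero they are expressed through boundary divisors, so by the splitting axiom each descendant operator becomes a combination of products of primary correlators. When $u$ and $e^j$ land on different components, only transcendental classes propagate through the node. This is because a correlator with a single insertion in $T_{X,\QQ}$ and all others Hodge vanishes: the GW correspondence is algebraic and $T_{X,\QQ}\perp H^*(X,\QQ)_\Hdg$. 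So everything lands in the algebra generated by the $\Phi^+_{\underline a,\beta}$.

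The paper's proof consists of exactly these two steps: the dimension computation showing the multiplier is $1$, and the reduction of descendants to primaries. Your proposal lacks the punchline of the first and all of the second.
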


\begin{proof}
Fix $\underline{a}:=(a_0,\dotsc,a_h) \in \NN^{1+h}$ and $\beta \in \mathrm{NE}_\NN(X)$.
Using the divisor equation, the coefficient of $T_0^{a_0} \dotsm T_h^{a_h} Q^\beta$ in $\(\kappa_\tau^{X,\mathrm{trans}}\)_{|T^+_{X,\QQ}}$ equals
$$\cfrac{\int_{\beta} c_1(X) + \sum_{k=0}^h \(1-\frac{\deg(\alpha_k)}{2}\)a_k}{a_0! \dotsm a_h!} ~~\Phi^+_{\underline{a},\beta},$$
where the endomorphism $\Phi^+_{\underline{a},\beta} \colon T^+_{X,\QQ} \to T^+_{X,\QQ}$ is defined by
$$\Phi^+_{\underline{a},\beta}(u) = \sum_{j \in J^+} \langle u, e^j, \alpha_0^{a_0}, \dotsc, \alpha_h^{a_h} \rangle_{0,2+a_0+\dotsb+a_h,\beta} ~ e_j$$
on $u \in T^+_{X,\QQ}$, where the homogeneous basis $(e_0, \dotsc, e_m)$ of $H^*(X,\QQ)$ respects the decompositions $H^*(X,\QQ)_\Hdg \oplus T_{X,\QQ}$ and into even/odd degrees, and $J^+$ is the subset of indices for which $e_j \in T^+_{X,\QQ}$.
Observe that, for the correlators in $\Phi^+_{\underline{a},\beta}$ to be non-zero, the dimension equation implies
$$\cfrac{\int_{\beta} c_1(X) + \sum_{k=0}^h \(1-\frac{\deg(\alpha_k)}{2}\)a_k}{a_0! \dotsm a_h!}=\cfrac{2+\deg(u)-\deg(e_j)}{2 \cdot a_0! \dotsm a_h!},$$
so that the coefficient of $\Phi^+_{\underline{a},\beta}$ vanishes if and only if $\deg(e_j)=\deg(u)+2$.
Since $T_{X,\QQ}^+$ is pure-degree, then the relevant coefficients are always non-zero, so that
$$G_{X,\QQ}^+=\QQ[\(\Phi_{\underline{a},\beta}\)_{|T_{X,\QQ}^+}].$$

Definition 3.2 in \cite{OpenAI} provides the $\QQ$-algebra generated by the endomorphisms $\Phi^+_{\underline{i}, \underline{r},\beta} \colon T_{X,\QQ}^+ \to T_{X,\QQ}^+$
defined by
$$\Phi^+_{\underline{i}, \underline{r},\beta}(u) = \sum_{j \in J^+} \langle \tau_{r_{n+1}}(u), \tau_{r_{n+2}}(e^j), \tau_{r_1}(\alpha_{i_1}), \dotsc, \tau_{r_n}(\alpha_{i_n}) \rangle_{0,2n,\beta} ~ e_j,$$
using descendants (i.e.~$\psi$-classes), with $n \in \NN$, $\underline{i} \in \left\lbrace 0, \dotsc, h \right\rbrace^n$, and $\underline{r} \in \NN^{n+2}$.
Since $\psi$-classes in genus zero can be rewritten in terms of boundary divisors, correlators with descendants as above can be rewritten using the gluing axiom in terms of products of correlators without descendants (i.e.~with $\underline{r}=0$).
These are exactly the endomorphisms $\Phi^+_{\underline{a},\beta}$.
\end{proof}

\subsection{Blow-up formula}
Let $i \colon Z \hookrightarrow Y$ be an embedding of varieties of codimension $r \geq 2$. Denote by $X$ the blow-up of $Y$ at $Z$ and by $\mathrm{Bl} \colon X \to Y$ the blow-up map.
The classical decomposition of Hodge structures is
\begin{equation}\label{class}
H^*(X,\ZZ) \simeq H^*(Y,\ZZ) \oplus \bigoplus_{k=1}^{r-1} H^{*-2k}(Z,\ZZ)(-k),
\end{equation}
given by $\mathrm{Bl}^*$ on $Y$ and by $j_*\circ \(\xi^{k-1} \cup \) \circ \mathrm{Bl}_{|E}^*$ on the summands, where $j \colon E \hookrightarrow X$ is the embedding of the exceptional divisor and $\xi$ is its hyperplane class.
It is an isomorphism of $\ZZ$-Hodge structures.
Furthermore, it is an isometry when $r=2$.

Following Iritani \cite{Iritani},  we define the integer
$$s := \left\lbrace \begin{array}{ll}
2(r-1) & \textrm{if $r$ is odd,} \\
r-1 & \textrm{if $r$ is even,}
\end{array}\right.$$
and the graded $\QQ$-algebra
$$R^*_\mathrm{Bl} := \QQ[\mathfrak{q}^{\pm 1/s}][[Q_Y,T_0,\dotsc,T_h]],$$
where $Q_Y$ is the Novikov variable of $Y$, $T_0, \dotsc, T_h$ are the formal coordinates associated to a (homogeneous) basis of $H^*(X,\QQ)_\Hdg$ as before, and $\mathfrak{q}$ is a new formal variable of degree $2(r-1)$.
Recall that we have three maps of graded $\QQ$-algebras
$$R^*(X,\QQ) \hookrightarrow R^*_\mathrm{Bl}~,~~ 
R^*(Y,\QQ) \hookrightarrow R^*_\mathrm{Bl}~,~~R^*(Z,\QQ) \to R^*_\mathrm{Bl}$$
defined as the identity on formal coordinates (using the above classical decomposition of cohomology) and by the following on the Novikov variables:
$$Q_X^{\beta_X} \mapsto Q_Y^{\mathrm{Bl}_*(\beta_X)} \mathfrak{q}^{-[E] \cdot \beta_X}~,~~Q_Y \mapsto Q_Y~,~~Q_Z^{\beta_Z} \mapsto Q_Y^{i_*(\beta_Z)} \mathfrak{q}^{-\frac{\int_{\beta_Z} c_1(\cN_i)}{r-1}},$$
where $\cN_i$ is the normal bundle of the embedding $i$.
In particular, we have
$$Q_X^{l_E} \mapsto \mathfrak{q},$$
where $l_E$ is the `vertical' line in the exceptional divisor $E$ defined by $l_E = \xi^{r-2}$.

\begin{nt}
Let $A \in \left\lbrace X,Y,Z\right\rbrace$ and $K$ be a field extension of $\QQ$.
We use the notation
$$\widehat{G}_{A,K}:= K[\kappa^{X,\mathrm{trans}}_\tau]_{R^*_{\mathrm{Bl},K}},$$
that is the $K$-algebra of the coefficients of $\kappa^{A,\mathrm{trans}}_\tau$ with respect to the formal variables $\mathfrak{q}^{1/s}$, $Q_Y$, $T_0, \dotsc, T_h$.
Note that we have
$$\widehat{G}_{A,K} = \widehat{G}_{A,\QQ} \otimes_\QQ K.$$
\end{nt}

\begin{lem}\label{compl}
We have
$$\widehat{G}_{X,K}=G_{X,K} \textrm{ and } \widehat{G}_{Y,K}=G_{Y,K}.$$
\end{lem}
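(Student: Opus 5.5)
The plan is to compare, coefficient by coefficient, the expansion of $\kappa^{A,\mathrm{trans}}_\tau$ in the variables of $R^*(A,\QQ)$ with its expansion in the variables of $R^*_\mathrm{Bl}$, for $A \in \{X,Y\}$. The key observation is that, for $A=X$ and $A=Y$, the ring maps $R^*(A,\QQ)\hookrightarrow R^*_\mathrm{Bl}$ are injective. Moreover, they send distinct monomials to distinct monomials, up to the fact that the $\mathfrak q$-exponents are integers. A coefficient with respect to $\mathfrak q^{1/s}$, $Q_Y$ and $T_0,\dotsc,T_h$ should therefore be exactly one coefficient with respect to $Q_A$ and the $T_k$, or zero. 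Since $\widehat G_{A,K}=\widehat G_{A,\QQ}\otimes_\QQ K$, and likewise for $G_{A,K}$, it suffices to work over $\QQ$.

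\textbf{The case of $Y$.} The map is the identity on $Q_Y$ and on the $T_k$. Thus $\kappa^{Y,\mathrm{trans}}_\tau$, viewed in $R^*_\mathrm{Bl}$, involves no $\mathfrak q$. Its coefficient of $\mathfrak q^{c/s}Q_Y^\beta T^{\underline a}$ is the $Q_Y^\beta T^{\underline a}$-coefficient if $c=0$, and $0$ otherwise. So the two coefficient algebras coincide. One detail needs checking: the parameter $\tau$ for $Y$ uses the coordinates of $H^*(X,\QQ)_\Hdg$ through the decomposition \eqref{class}. I would interpret $\kappa^Y$ with $\tau$ restricted to the $Y$-summand, with the remaining $T_k$ simply absent. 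Either way, extra variables on which nothing depends do not change the coefficient algebra.

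\textbf{The case of $X$.} Write the monomials as $Q_X^{\beta_X}\mapsto Q_Y^{\mathrm{Bl}_*\beta_X}\mathfrak q^{-[E]\cdot\beta_X}$. The main step is injectivity of $\beta_X\mapsto(\mathrm{Bl}_*\beta_X,-[E]\cdot\beta_X)$ on curve classes, modulo torsion.
\begin{itemize}
\item From \eqref{class} in degree two, $H^2(X)=\mathrm{Bl}^*H^2(Y)\oplus\QQ[E]$.
\item A class $\beta_X$ is determined by its pairings with $\mathrm{Bl}^*D$ and with $E$.
\item By the projection formula, $\mathrm{Bl}^*D\cdot\beta_X=D\cdot\mathrm{Bl}_*\beta_X$.
\end{itemize}
So the map is injective. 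Hence each monomial $\mathfrak q^{c/s}Q_Y^{\gamma}T^{\underline a}$ comes from at most one monomial $Q_X^{\beta_X}T^{\underline a}$, and the coefficient sets agree. This gives $\widehat G_{X,\QQ}=G_{X,\QQ}$.

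\textbf{Expected obstacle.} The main difficulty is convergence and well-definedness. Negative powers of $\mathfrak q$ appear, since $-[E]\cdot\beta_X$ can be negative. I have to check that for fixed $(\gamma,c)$ only one $\beta_X$ contributes, so that no infinite sums arise. This is exactly the injectivity above. It also shows that the image lies in $\QQ[\mathfrak q^{\pm1/s}][[Q_Y,T]]$, because for fixed $\gamma$ effective classes have bounded $E$-degree up to finitely many possibilities, by Gromov compactness. This is how the embedding was already asserted in the setup, so I may take it as given.

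The $\mathfrak q^{1/s}$ grading is harmless: for $X$ and $Y$ only integral powers occur, so all other coefficients vanish. Fractional powers arise only for $Z$, which is excluded from the lemma.
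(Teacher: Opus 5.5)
Your proposal is correct and follows the paper's route. The paper's proof is a single line appealing to the injectivity of $R^*(X,\QQ)\hookrightarrow R^*_{\mathrm{Bl}}$ and $R^*(Y,\QQ)\hookrightarrow R^*_{\mathrm{Bl}}$; you supply the form actually needed, namely that distinct monomials go to distinct monomials because $\beta_X\mapsto(\mathrm{Bl}_*\beta_X,-[E]\cdot\beta_X)$ is injective.

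One caveat on your aside about the image. The Gromov-compactness claim is false as stated: for $\gamma=0$, every $\beta_X=d\,l_E$ with $d\geq 0$ is effective, with unbounded $E$-degree. Finiteness of the $\mathfrak q$-exponents for fixed $(\gamma,\underline a)$ and fixed input/output degrees comes instead from the dimension axiom. It fixes $c_1(X)\cdot\beta_X=c_1(Y)\cdot\gamma-(r-1)[E]\cdot\beta_X$, and hence fixes $[E]\cdot\beta_X$. Since you rightly take the well-definedness of the map from the setup, this does not affect your argument.
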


\begin{proof}
It follows from the injectivity of $R^*(X,\QQ) \hookrightarrow R^*_\mathrm{Bl}$ and $R^*(Y,\QQ) \hookrightarrow R^*_\mathrm{Bl}$.
\end{proof}

\begin{lem}\label{surface}
Let $Z$ be a surface.
Then we have
$$\widehat{G}_{Z,\QQ}^+=G_{Z,\QQ}^+= \QQ \cdot \id_{T_{Z,\QQ}^+}.$$
Moreover, if $K_Z$ is nef, then $\widehat{G}^-_{Z,\QQ}=G^-_{Z,\QQ}$ and it is generated by the identity on $T^-_{Z,\QQ}$ and cup products with degree-$2$ Hogde classes.
\end{lem}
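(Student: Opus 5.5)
We work degree by degree, using the description of coefficients from the proof of Proposition \ref{comp}. For a surface $Z$, the even transcendental part $T^+_{Z,\QQ}$ lies in $H^2(Z,\QQ)$, since $H^0$ and $H^4$ are spanned by Hodge classes. Every coefficient of $\kappa^{Z,\mathrm{trans}}_\tau$ on $T^+_{Z,\QQ}$ is then a multiple of some endomorphism $\Phi^+_{\underline a,\beta}$. By the dimension computation in the proof of Proposition \ref{comp}, the scalar in front vanishes unless $\deg(e_j)=\deg(u)+2$. But $T^+_{Z,\QQ}$ is pure of degree $2$, so the Euler-field factor forces the coefficient to be $\frac{2+2-2}{2}=1$ times a correlator, and only the term with the correct grading survives.

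More directly, I would argue via the grading. $\kappa_\tau$ is homogeneous of degree $2$ for the total grading in which $\deg T_k = 2-\deg\alpha_k$, $\deg Q^\beta = 2\int_\beta c_1$, and $\deg\mathfrak q = 2(r-1)$. A coefficient of a monomial of degree $d$ therefore maps $H^2\cap T$ to $H^{4-d}\cap T$. Since $T^+$ sits in degree $2$, only monomials of degree $2$ contribute. The constant term of $\kappa$ is $c_1(Z)\cup$, and its projection to $T$ vanishes on $T^+$ because $c_1\cup u\in H^4$ is Hodge. The term $T_0\cdot(\alpha_0=1)$ has degree $2$ and contributes $\id$. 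Every other monomial of degree $2$ contributes a multiple of $\Phi^+_{\underline a,\beta}$, which I claim is a scalar on $T^+$. The intended argument: $\Phi^+_{\underline a,\beta}$ is an endomorphism of $\QQ$-Hodge structures of $T^+_{Z,\QQ}$. Its correlator $\langle u,v,\alpha_0^{a_0},\dots\rangle_{0,\beta}$ is computed by a Hodge class on $Z\times Z$ (the push-forward of the virtual class along the first two evaluation maps, capped with the Hodge insertions). Mumford--Tate / deformation invariance then shows that it acts by a scalar on the transcendental part. Concretely, GW invariants are deformation invariant, and for a general deformation in which the classes $\alpha_k$ and $\beta$ stay Hodge, the transcendental lattice has endomorphism algebra generated by scalars (plus possibly CM or RM fields). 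This is the delicate point.

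A cleaner route I would try first: a correspondence of pure degree $(2,2)$ on $Z\times Z$ coming from genus-zero stable maps is supported on the images of families of rational curves. A correspondence supported on $C\times C'$ or $C\times Z$ with $C$ a curve acts on $H^2$ through $H^2(C)$ or $H^0$, hence through Hodge classes. So its transcendental projection kills $T^+$ whenever $\beta\neq 0$ or $n\ge 1$ nontrivial insertions are present. When $\beta\neq0$, the image of $\overline{M}_{0,n}(Z,\beta)$ in $Z$ under each evaluation map is a union of rational curves or at most uniruled components. Via the decomposition through a resolution of the image, the action on transcendental $H^{2,0}$ classes factors through something of dimension $\le 1$ or through a uniruled surface, whose $H^{2,0}$ vanishes. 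Hence the transcendental action is zero, and only $\beta=0$ survives. There the classical cup products $u\cup \alpha_{k_1}\cup\cdots$ land in $T^+$ only when all $\alpha$'s have degree $0$, giving multiples of $\id$. The same reasoning applies after substituting $Q_Z^{\beta}\mapsto Q_Y^{i_*\beta}\mathfrak q^{\dots}$, which changes only the names of monomials, so $\widehat G^+_{Z,\QQ}=G^+_{Z,\QQ}$.

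For the odd part with $K_Z$ nef, $Z$ is not uniruled. Rational curves then move in at most finitely many or one-parameter families, and the expected dimension computation ($-\int_\beta K_Z\le 0$) together with the same support argument shows that $\beta\neq0$ terms vanish on $T^-\subset H^1\oplus H^3$. The classical terms give $\id$ and $u\mapsto \alpha\cup u$ for degree-$2$ Hodge classes $\alpha$ (sending $H^1\to H^3$), projected to $T^-$. The main obstacle is making the support argument rigorous with virtual classes. I would handle it by noting that the virtual class pushes forward to a cycle supported on the actual image of $\mathrm{ev}$, whose components in $Z\times Z$ are of the form $C\times C'$, $\Delta_C$ or $S$ uniruled, and then checking each case.
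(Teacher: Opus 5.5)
Your main line, the support argument, is sound once tidied (see below). It proves the key fact, that every $\beta\neq0$ contribution vanishes on $T_{Z,\QQ}$, geometrically, whereas the paper stays inside the axioms of Gromov--Witten theory. The observation you did not use is that $u\in T^+_{Z,\QQ}\subset H^2(Z)$ is itself a divisor class. For $\beta\neq0$ the divisor axiom then gives $\langle u,e^j,\dots\rangle_{0,n,\beta}=\bigl(\int_\beta u\bigr)\langle e^j,\dots\rangle_{0,n-1,\beta}=0$, because $\beta$ is algebraic and $u$ is orthogonal to Hodge classes. For $\beta=0$ only three-point invariants survive, so only the $T_0$-coefficient $\id$ remains. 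Since it comes from $\beta=0$, one gets $\widehat{G}^+_{Z,\QQ}=G^+_{Z,\QQ}$ at once. On odd classes the paper kills $u\in H^1$ by the degree-one axiom. For $u\in H^3$ it reduces to point insertions, where the dimension constraint forces $\int_\beta c_1(Z)=2+a_h>0$, which is excluded when $K_Z$ is nef.

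You instead separate two cases. For uniruled $Z$, $H^{2,0}=0$, so $T^+_{Z,\QQ}=0$. For non-uniruled $Z$, rational curves are rigid, so every $\beta\neq0$ correspondence is supported on a finite union of products $C_i\times C_j$ of rational curves. It acts on $H^2$ by $u\mapsto\sum m_{ij}(\int_{C_i}u)[C_j]$, a geometric shadow of the divisor axiom, and it kills odd classes after normalizing the $C_i$. The paper's argument is a few lines, uniform in $Z$, and needs no control of supports of virtual classes; it also shows exactly which odd correlators can survive when $K_Z$ is not nef. Yours needs $p_g=0$ for uniruled surfaces, rigidity of rational curves, and a top-homology argument on the support. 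In exchange it gives the vanishing of all $\beta\neq0$ terms on $T^-_{Z,\QQ}$ for every non-uniruled surface, not only when $K_Z$ is nef.

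A few things to fix.

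\emph{Mumford--Tate detour.} Drop it. At best it places the operator in the endomorphism algebra of the Hodge structure $T_{Z,\QQ}$, which can be a CM or RM field, so it cannot give scalars. In any case the true statement is vanishing.

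\emph{Rigidity.} On a non-uniruled surface there are no one-parameter families of rational curves, since they would cover $Z$. This rigidity is exactly what your argument on $T^-$ needs: a family $\bigcup_t C_t\times C_t$ can carry a nonzero $H^3\to H^1$ correspondence, as the fibres of a ruled surface over a curve of positive genus show. Once rigidity is used, the expected-dimension remark is superfluous.

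\emph{The substitution.} The map $R^*(Z,\QQ)\to R^*_{\mathrm{Bl}}$ need not be injective, so it does more than rename monomials. The correct reason for $\widehat{G}=G$ here is that every $\beta\neq0$ coefficient already vanishes on $T_{Z,\QQ}$, and only $\beta=0$ maps to $Q_Y^0\mathfrak{q}^0$.

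\emph{The odd generators.} The $Q^0$-part of $\kappa_\tau$ is $\Eu_\tau\cup$, where divisor directions carry the factor $1-\deg(\alpha_i)/2=0$. So on $T^-_{Z,\QQ}$ the classical generators are $\id$ and $c_1(Z)\cup$ (the constant term), not cup products with arbitrary degree-two Hodge classes. Your write-up shares this looseness with the paper's proof.
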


\begin{proof}
Fix $\underline{a}:=(a_0,\dotsc,a_h) \in \NN^{1+h}$ and $\beta \in \mathrm{NE}_\NN(Z)$.
Then the coefficient of $T_0^{a_0} \dotsm T_h^{a_h} Q_X^\beta$ in $\kappa_\tau^{Z,\mathrm{trans}}$ equals
$$\cfrac{\int_{\beta} c_1(Z) + \sum_{k=0}^h \(1-\frac{\deg(\alpha_k)}{2}\)a_k}{a_0! \dotsm a_h!} ~~ \Phi_{\underline{a},\beta},$$
where the endomorphism $\Phi_{\underline{a},\beta} \colon T_{Z,\QQ} \to T_{Z,\QQ}$ is defined by
$$\Phi_{\underline{a},\beta}(u) = \sum_{j \in J} \langle u, e^j, \alpha_0^{a_0}, \dotsc, \alpha_h^{a_h} \rangle_{0,2+a_0+\dotsb+a_h,\beta} ~ e_j$$
on $u \in T_{Z,\QQ}$, where the homogeneous basis $(e_0, \dotsc, e_m)$ of $H^*(Z,\QQ)$ respects the decompositions $H^*(Z,\QQ)_\Hdg \oplus T_{Z,\QQ}$ and into even/odd degrees, and $J$ is the subset of indices for which $e_j \in T_{Z,\QQ}$.

Since $Z$ is a surface, we can use the string, divisor, and degree-$1$ equations to eliminate most of the correlators.
Precisely, if $\underline{a}=(\delta_{k=i})_{0 \leq k \leq h}$ for some $i$ and $\beta=0$, then
\begin{eqnarray*}
\Phi_{\underline{a},\beta}(u) & = & \sum_{j \in J} \langle u, e^j, \alpha_i \rangle_{0,3,0} ~ e_j \\
& = & \sum_{j \in J} \int_Z u\cup e^j\cup \alpha_i ~ e_j \\
& = & u \cup \alpha_i \\
& = & \left\lbrace \begin{array}{cl}
u & \textrm{if $\alpha_i \in H^0(Z)$,} \\
u \cup \alpha_i & \textrm{if $\alpha_i \in H^2(Z)$ and $u \in H^1(Z)$,} \\
0 & \textrm{otherwise.}
\end{array}\right. 
\end{eqnarray*}
Otherwise, we have
$$\Phi_{\underline{a},\beta}(u) = \left\lbrace \begin{array}{cc}
0 & \textrm{if $u \in H^{\leq 2}(Z)$,} \\
\(\delta_{a_0=0} \prod_{k=1}^{h-1} \(\int_{\beta}\alpha_k\)^{a_k}\) \cdot \sum_{j \in J} \langle u, e^j, \alpha_h^{a_h} \rangle_{0,2+a_h,\beta} ~ e_j & \textrm{if $u \in H^3(Z)$,}
\end{array}
\right.
$$
where we use $\int_\beta u=0$ and we choose $\alpha_0=\un_Z$, $\alpha_1, \dotsc, \alpha_{h-1} \in H^2(Z)$, and $\alpha_h \in H^4(Z)$.
Note that the second line is non-zero only if $e_j \in H^1(Z)$ and $\int_{\beta}c_1(Z)=2+a_h$.
In particular, it is zero when $K_Z$ is nef, but it could be non-zero in general.
As a consequence, when restricted to $T_{Z,\QQ}^+$, we obtain
$$G_{Z,\QQ}^+=\QQ \cdot \id_{T_{Z,\QQ}^+}.$$
Since this is achieved by a correlator with $\beta=0$, then we also find
$$\widehat{G}_{Z,\QQ}^+=\QQ \cdot \id_{T_{Z,\QQ}^+},$$
as well as the result on $G_{Z,\QQ}^-$ when $K_Z$ is nef.
\end{proof}

\begin{lem}\label{curve}
Let $Z$ be a curve.
Then we have
$$\widehat{G}_{Z,\QQ}=G_{Z,\QQ}= \QQ \cdot \id_{T_{Z,\QQ}}.$$
\end{lem}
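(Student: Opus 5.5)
We follow the proof of Lemma \ref{surface}, which is simpler here because a curve has cohomology only in degrees $0$, $1$ and $2$. For a curve $Z$, $H^0$ and $H^2$ are spanned by Hodge classes, so the transcendental lattice satisfies $T_{Z,\QQ}^+=0$ and $T_{Z,\QQ}=T_{Z,\QQ}^-\subset H^1(Z,\QQ)$. We choose the basis of $H^*(Z,\QQ)_\Hdg$ to be $\alpha_0=\un_Z$ and $\alpha_1=[\mathrm{pt}]\in H^2(Z)$. As in the proof of Lemma \ref{surface}, we fix $\underline{a}=(a_0,a_1)$ and $\beta\in\mathrm{NE}_\NN(Z)$. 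The coefficient of $T_0^{a_0}T_1^{a_1}Q^\beta$ in $\kappa^{Z,\mathrm{trans}}_\tau$ is then a rational multiple of
$$\Phi_{\underline{a},\beta}(u)=\sum_{j\in J}\langle u,e^j,\alpha_0^{a_0},\alpha_1^{a_1}\rangle_{0,2+a_0+a_1,\beta}\, e_j,$$
with $u,e_j\in H^1(Z)$. The task is to show that every such endomorphism is a multiple of $\id$, and that the identity itself occurs with a nonzero coefficient.

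We first treat $\beta=0$. The string equation kills every correlator with $a_0\geq 1$ except the three-point one $\langle u,e^j,\un_Z\rangle_{0,3,0}$. That correlator gives $\Phi(u)=u$, with coefficient $(1-0)\cdot 1\neq 0$. Correlators with $a_0=0$ and $\beta=0$ have at least three points. Three-point ones are cup products; here $u\cup\alpha_1\in H^3(Z)=0$, so they vanish. Correlators with $n\geq 4$ points and degree zero vanish in genus zero. So the $\beta=0$ part contributes exactly $\QQ\cdot\id$.

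Next we take $\beta\neq 0$, which forces $Z\simeq\PP^1$ since there are no nonconstant genus-zero maps to a curve of genus $\geq 1$. In that case $H^1(Z)=0$, so $T_{Z,\QQ}=0$ and the statement is trivial. We can also see this directly: if $g(Z)\geq 1$, then $\mathrm{NE}_\NN$ contains no effective genus-zero classes, so all correlators with $\beta\neq 0$ vanish. Hence $G_{Z,\QQ}=\QQ\cdot\id_{T_{Z,\QQ}}$. (When $T_{Z,\QQ}=0$ this reads $G_{Z,\QQ}=0=\QQ\cdot\id$.)

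For $\widehat{G}_{Z,\QQ}$, we pass to $R^*_\mathrm{Bl}$ through the map $Q_Z^{\beta}\mapsto Q_Y^{i_*\beta}\mathfrak{q}^{-\int_\beta c_1(\cN_i)/(r-1)}$. This map need not be injective, so the coefficients could a priori mix. However, the only nonzero contributions are at $\beta=0$, which maps to $1$, and that monomial is unaffected. So the coefficients computed in $R^*_\mathrm{Bl}$ are again multiples of $\id$, and the identity occurs with coefficient $1$. The only delicate point is organisational: we must check that the string equation and the degree-zero vanishing leave exactly the identity. We do not expect a real obstacle.
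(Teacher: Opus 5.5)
Your proof is correct and follows the paper's outline: reduce to the coefficient endomorphisms $\Phi_{\underline{a},\beta}$ on $T_{Z,\QQ}=H^1(Z,\QQ)$ and check that only the string-equation term $\langle u,e^j,\un_Z\rangle_{0,3,0}$ survives, i.e.\ the identity.

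The difference is in how the terms with $\beta\neq 0$ are killed. The paper's one-line proof uses the GW axioms it cites, exactly as in Lemma~\ref{surface}:
\begin{itemize}
\item the divisor equation removes the insertions of $\alpha_1=[\mathrm{pt}]$;
\item the string equation kills $a_0\geq 1$ once $\beta\neq 0$;
\item the remaining two-point invariants $\langle u,e^j\rangle_{0,2,\beta}$ vanish by the dimension constraint, since the virtual dimension $\int_\beta c_1(Z)=(2-2g)d$ (for $\beta=d[Z]$) is even while $u$ and $e^j$ have total complex degree $1$.
\end{itemize}
You argue geometrically instead. For $g(Z)\geq 1$ there are no non-constant genus-zero maps to $Z$, so every moduli space with $\beta\neq 0$ is empty. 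For $g(Z)=0$ one simply has $T_{Z,\QQ}=0$.

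Your route is more elementary for curves. You also spell out that $\id$ occurs with coefficient $1$ and that the Novikov substitution defining $\widehat{G}_{Z,\QQ}$ is harmless because only $\beta=0$ contributes. The paper's axiomatic route has the advantage of being uniform with the surface case. There your shortcut is unavailable, since surfaces with nonzero transcendental part, such as K3 surfaces or ruled surfaces over curves of positive genus, do contain rational curves.

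Two harmless slips in wording:
\begin{itemize}
\item It is the non-vanishing of a $\beta\neq 0$ correlator, not $\beta\neq 0$ itself, that forces $Z\simeq\PP^1$.
\item For $a_0=a_1=\beta=0$ the correlator has only two points and vanishes by instability; it does not have at least three points.
\end{itemize}
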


\begin{proof}
As in the previous proof, using the string, divisor, and degree-one equations, the only non-zero $\Phi_{\underline{a},\beta}$ is the identity.
\end{proof}

%

At last, we recall Iritani's blow-up formula in GW theory.
\begin{thm}[Iritani, \cite{JG,Iritani}]\label{Bl}
Let $K$ be the number field obtained by adding the $s$-th roots of unity and the imaginary unit $\ci$.
Then there exist an invertible morphism of graded $K$-algebras of degree zero $g \colon R^*_{\mathrm{Bl},K} \to R^*_{\mathrm{Bl},K}$ and a $R^*_{\mathrm{Bl},K}$-linear isomorphism
$$\Psi ~~\colon \(H^*(Y,K) \oplus \bigoplus_{k=1}^{r-1} H^{*-2k}(Z,K)\)_{R^*_{\mathrm{Bl},K}} \to H^*\(X,K\)_{R^*_{\mathrm{Bl},K}}$$
such that
\begin{eqnarray*}
\(\kappa^Y \oplus \bigoplus_{k=1}^{r-1} \kappa^Z \)_\tau & = & \Psi^{-1} \circ \kappa^X_{g(\tau)} \circ \Psi,
\end{eqnarray*}
where $\tau :=T_0 \alpha_0+\dotsb+T_h \alpha_h$.
Moreover, the isomorphism $\Psi$ is in the $K$-span of morphisms of $\QQ$-Hodge structures.
\end{thm}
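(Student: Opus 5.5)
This statement is Iritani's theorem, so the plan is to extract it from Iritani's decomposition of quantum $D$-modules under blow-up \cite{Iritani}, as already reformulated in \cite{JG}, rather than to reprove the GW recursion. Iritani's result is an isomorphism of quantum $D$-modules over a formal neighbourhood of the parameter space. It takes place after base change to a ring in which the Novikov variable $Q_X^{l_E}$ is identified with $\mathfrak{q}$, and in which $Q_Z$ is sent to $Q_Y^{i_*\beta}\mathfrak{q}^{-\int_\beta c_1(\cN_i)/(r-1)}$. The resulting isomorphism is
$$\mathrm{QDM}(X) \simeq g^*\mathrm{QDM}(Y) \oplus \bigoplus_{k=1}^{r-1} \varsigma_k^*\mathrm{QDM}(Z).$$
Here the maps $g$, $\varsigma_k$ are mirror-type coordinate changes. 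The $r-1$ copies of $Z$ are indexed by the $(r-1)$-th roots of $\mathfrak{q}$ (up to signs), which is why $\mathfrak{q}^{1/s}$, the $s$-th roots of unity and $\ci$ must be adjoined.

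The first step is to recall that an isomorphism of quantum $D$-modules intertwines the connections in the $z$-direction. The $z\partial_z$-part of the quantum connection is $z^{-1}\kappa_\tau + \mu$, where $\mu$ is the grading operator. Comparing the $z^{-1}$-parts at $z=0$, after restricting the $D$-module isomorphism to $z=0$, yields a linear isomorphism $\Psi$ conjugating $\kappa^X$ to the direct sum of $\kappa^Y$ and the $r-1$ copies of $\kappa^Z$, each pulled back along its own coordinate change.

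The second step is to absorb the various coordinate changes into a single invertible degree-zero map $g$ of $R^*_{\mathrm{Bl},K}$ acting on $\tau$. Iritani's parameters for $Y$ and $Z$ are pulled back from those of $X$ via the classical decomposition \eqref{class}. One reparametrizes the $X$-side instead, putting the identity on the $Y\oplus Z^{\oplus(r-1)}$ side, so that only $\kappa^X_{g(\tau)}$ appears. Because the Euler field is preserved, $g$ respects degrees and is formally invertible.

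The last step, and the one I expect to be the main obstacle, is the Hodge-theoretic statement that $\Psi$ lies in the $K$-span of morphisms of $\QQ$-Hodge structures. I would inspect Iritani's construction. $\Psi$ is built from $\mathrm{Bl}^*$, from $j_*(\xi^{k-1}\cup\mathrm{Bl}_{|E}^*(\cdot))$, from quantum products with algebraic classes, and from fundamental solutions whose coefficients are GW invariants, hence integrals against algebraic cycles. Each such coefficient is a morphism of $\QQ$-Hodge structures. The remaining scalars come from the discrete Fourier transform over the roots of unity and from powers of $\mathfrak{q}^{1/s}$, all lying in $K$. Some care is needed to check that the Gamma-type factors occurring in Iritani's integral structure do not enter $\Psi$ itself, but only its comparison with $K$-theory; for the statement here only the cohomological isomorphism is needed, which avoids transcendental constants.
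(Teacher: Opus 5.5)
Your overall route, reading the theorem off Iritani's decomposition of quantum $D$-modules, is the intended one: the paper gives no argument beyond citing \cite{Iritani} and \cite{JG}. Your first step is also sound, namely taking $\Psi$ to be the $z=0$ value of the $D$-module isomorphism and comparing the $z^{-1}$-parts of $\nabla_{z\partial_z}$. Your second step, however, cannot work. The $r-1$ copies of $Z$ cannot all be placed at the parameter $\tau$ by reparametrizing the $X$-side alone. In Iritani's theorem the $j$-th copy is pulled back along $\varsigma_j$, whose $\un_Z$-component at $Q_Y=\tau=0$ is a unit of degree two (a nonzero multiple of $\mathfrak{q}^{1/(r-1)}$) that depends on $j$. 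This is exactly your own remark that the copies are indexed by roots of $\mathfrak{q}$, and such a shift cannot be removed.

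Concretely, specialize $Q_Y=T_0=\dots=T_h=0$. Then $\kappa^Y_\tau\oplus\bigoplus_k\kappa^Z_\tau$ becomes $c_1(Y)\cup\oplus\bigoplus_k c_1(Z)\cup$, which is nilpotent, because every $Q_Z^\beta$ with $\beta\neq 0$ is sent to a multiple of a nontrivial monomial in $Q_Y$. On the $X$-side only curves in the fibres of $E\to Z$ survive. For $Y=\PP^2$ and $Z$ a point, the relation $E\star E=-\mathrm{pt}+\mathfrak{q}E$ makes $\kappa^X$ lower triangular with diagonal $(0,0,-\mathfrak{q},0)$ (cf. the factor $e^{-\mathfrak{q}/z}$ in the proof of Proposition \ref{initial}). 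In this example a degree-zero substitution can only move all four eigenvalues together (through the $\un$-coordinate) or rescale $\mathfrak{q}$ by a unit, so no $g$ as in your Step 2 exists. What Iritani's theorem actually gives is conjugation with $\kappa^Z_{\varsigma_j(\tau)}$ on the $j$-th copy. By the string equation this is $\kappa^Z$ at a formal point plus a scalar $c_j\,\id$, with the $c_j$ pairwise distinct units. These scalars are in fact what make the coefficient algebra split blockwise later in the paper, so the displayed identity has to be read with them.

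Your last step has two halves. The Hodge-theoretic half agrees with what the paper says about $\Psi$: it is assembled from pull-backs, push-forwards and cup products with algebraic classes. The $K$-rationality half rests on an incorrect claim. The $\Gamma$-functions do enter $\Psi$ itself, not only its comparison with $K$-theory. As recalled in the proof of Proposition \ref{initial}, $\Psi$ is rebuilt from the initial datum $\Phi^\circ$ by Birkhoff factorization. The $Z$-blocks of $\Phi^\circ$ are given by $\cF_{Z,j}$, the formal asymptotic expansion of the Mellin--Barnes integral $\int e^{\lambda\log S_Z/z}G_Z\,\mathbf{f}_{|Z}\,d\lambda$, where $G_Z$ is a product of $\Gamma$-functions. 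Moreover, the exponential prefactors there involve $h_{Z,0}=\ci\pi\rho_Z$. So you would have to show that, after normalizing by $\sqrt{2\pi z}\,e^{c_Z\lambda_j/z}$, the transcendental constants coming from Stirling's formula ($\sqrt{2\pi}$, $\pi$, logarithms) cancel and all coefficients lie in $K$. This is precisely the input the paper imports from \cite{JG} without reproving it, and your proposal does not supply it either.
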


\begin{rem}
We will see in the next subsection that the isomorphism $\Psi$ respects the decomposition into even/odd degrees in cohomology.
Therefore, if $S$ is a surface with minimal model $\Sigma$, then
$$G_{S,K}^+=G_{\Sigma,K}^+.$$
Moreover, when $K_\Sigma$ is nef, the algebra $G^-_{S,K}=G_\Sigma$ is generated by the identity and cup products with degree-$2$ Hodge classes.
\end{rem}

\section{Refinement}
We investigate Iritani's blow-up formula in order to show that it is compatible with the usual blow-up decomposition in some cases.

\begin{lem}
Let $i \colon U \to V$ be a morphism of varieties and $\alpha \in H^*(U,\QQ)_\Hdg$ be a Hodge class.
Then the maps $i^*$, $i_*$, and $\alpha \cup$ preserve the decompositions into Hodge/transcendantal classes and into even/odd degrees.
This means e.g.~that the maps
\begin{eqnarray*}
i_* & \colon & H^*(U,\QQ)_\Hdg \oplus T_{U,\QQ} \to H^*(V,\QQ)_\Hdg \oplus T_{V,\QQ} \\
i_* & \colon & H^\even(U,\QQ) \oplus H^\mathrm{odd}(U,\QQ) \to H^\even(V,\QQ) \oplus H^\mathrm{odd}(V,\QQ)
\end{eqnarray*}
are diagonal by blocks (and similarly for the two other maps).
Furthermore, their complexifications preserve the Hochschild degree.
\end{lem}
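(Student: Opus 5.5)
The approach is to reduce every assertion to two facts about each of the three maps $i^*$, $i_*$ and $\alpha\cup$. First, each map is a morphism of $\QQ$-Hodge structures up to a Tate twist. Second, each map is adjoint, with respect to the Poincar\'e pairings, to another map of the same kind:
\begin{itemize}
\item $i^*$ and $i_*$ are adjoint by the projection formula $\int_V i_*a\cup b=\int_U a\cup i^*b$;
\item $\alpha\cup$ is self-adjoint up to sign.
\end{itemize}
The even/odd statement and the Hochschild-degree statement are immediate. Pullback and cup product with a homogeneous class shift cohomological degree by a fixed amount. Pushforward shifts degree by $2(\dim V-\dim U)$, which is even. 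Each Hodge class $\alpha$ may be decomposed into homogeneous components, each of even degree, so parity is preserved. All three maps are morphisms of Hodge structures of type $(c,c)$ for suitable $c$, because $\alpha$ is of type $(p,p)$ in each degree and Gysin maps shift type by $(d,d)$. Hence $p-q$ is preserved after complexification.

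For the Hodge/transcendental decomposition I would treat the two blocks separately.

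\textbf{Hodge part.} Each map sends $\QQ$-Hodge classes to $\QQ$-Hodge classes: it is defined over $\QQ$ and preserves types $(p,p)$ up to a shift $(c,c)$. So the Hodge part lands in the Hodge part.

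\textbf{Transcendental part.} Take $t\in T_{U,\QQ}$ and any Hodge class $h$ on $V$. For the pushforward, the projection formula gives
$$\int_V i_*t\cup h=\int_U t\cup i^*h=0,$$
since $i^*h$ is Hodge on $U$. Hence $i_*t\in T_{V,\QQ}$. Symmetrically, for $t\in T_{V,\QQ}$ and $h$ Hodge on $U$,
$$\int_U i^*t\cup h=\int_V t\cup i_*h=0,$$
since $i_*h$ is Hodge. For cup product with $\alpha$, we have $\int t\cup\alpha\cup h=0$ because $\alpha\cup h$ is Hodge. In each case the transcendental part lands in the transcendental part, so all three maps are block diagonal.

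The one point needing care is the definition of $T$ as an orthogonal complement. I need the pairing restricted to $H^*(\cdot,\QQ)_\Hdg$ to be nondegenerate, so that $H=H_\Hdg\oplus T$ really holds and "orthogonal to all Hodge classes" characterizes $T$. The paper already asserts this decomposition; it follows from the Hodge–Riemann bilinear relations, since the Hodge locus is a sub-Hodge structure (the complexification of a rational sub-Hodge structure) on which the polarization is nondegenerate. I expect this to be the only step requiring more than bookkeeping.

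A further point is that $i_*$ is defined through Poincar\'e duality, which needs $U$ and $V$ smooth projective; this is part of the paper's standing convention. I would also note that $\alpha\cup$ should act on $H^*(U)$ itself, which is how the statement is read.
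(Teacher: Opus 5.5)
The paper states this lemma without proof and uses it as a standard fact, for instance in the next proposition. So there is no written argument to compare against. Your proof is correct and supplies that justification:
\begin{itemize}
\item the parity and Hochschild-degree claims follow from the even degree shifts and the $(c,c)$ type shifts;
\item Hodge classes go to Hodge classes for the same reason;
\item the projection formula $\int_V i_*a\cup b=\int_U a\cup i^*b$, combined with the Hodge-to-Hodge property, shows that transcendental classes go to transcendental classes.
\end{itemize}

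One correction concerns your side remark on nondegeneracy. It is not needed for block-diagonality. $T$ is \emph{defined} as the orthogonal complement of the Hodge locus, so your orthogonality computations already prove $i_*T_{U,\QQ}\subset T_{V,\QQ}$ and the analogous inclusions. Nondegeneracy only enters in the splitting $H^*=H^*_{\Hdg}\oplus T$, which the paper takes for granted.

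If you do want to justify that splitting, ``the polarization is nondegenerate on a sub-Hodge structure'' is not enough as stated, because the Poincar\'e pairing is not the polarization. The Poincar\'e pairing pairs $H^{2p}$ with $H^{2n-2p}$, where $n=\dim X$. For example, its restriction to the sub-Hodge structure $H^{2p}(X,\QQ)$ with $2p\neq n$ is identically zero.

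The correct route goes through Hard Lefschetz, taking $2p\le n$. $L^{n-2p}$ identifies the rational $(p,p)$-classes of degree $2p$ with those of degree $2n-2p$. The Lefschetz decomposition is defined over $\QQ$ and respects Hodge types. Hence the Hodge classes of degree $2p$ split into pieces $L^jP^{2k}$ that are mutually orthogonal for $(x,y)\mapsto\int_X L^{n-2p}x\cup y$. On each piece this form is $\pm$-definite by Hodge--Riemann. This gives nondegeneracy of the pairing between Hodge classes of degrees $2p$ and $2n-2p$.
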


\begin{pro}
The isomorphism $\Psi$ respects the decompositions into Hodge/transcendantal classes and into even/odd degrees.
Furthermore, its complexification respects the Hochschild degree.
\end{pro}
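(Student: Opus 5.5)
We plan to prove the proposition by unwinding how Iritani constructs $\Psi$ in \cite{Iritani}, and writing it as a composition of elementary operations. Each operation will be shown to preserve the three decompositions: Hodge/transcendental, even/odd, and Hochschild degree. The preceding lemma already handles the geometric pieces. For a morphism $i$ and a Hodge class $\alpha$, the maps $i^*$, $i_*$ and $\alpha\cup$ are block diagonal for the Hodge/transcendental and even/odd splittings, and their complexifications preserve Hochschild degree. So it suffices to check that every ingredient of $\Psi$ is either of this type, or is an $R^*_{\mathrm{Bl},K}$-linear combination of such maps with coefficients in $K$ and in the formal variables.

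Concretely, Iritani's $\Psi$ restricted to $H^*(Y)$ is, up to the change of variables $g$ and the Novikov identification, a fundamental solution (Galois/Fourier--Laplace type) transformation built from $\mathrm{Bl}^*$. On the $k$-th copy of $H^*(Z)$, it is built from $j_*\circ(\xi^{k-1}\cup)\circ\mathrm{Bl}_{|E}^*$ twisted by characteristic classes. These twists include Gamma-class type factors of $\cN_i$ and of the tautological bundles on $E$, powers of $\mathfrak{q}^{1/s}$, and roots of unity. All characteristic classes involved, such as Chern classes of $\cN_i$, $T_Y$, $T_Z$, $\xi$ and the Euler field $\Eu_\tau$, are Hodge classes, since they are algebraic. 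The quantum products $\star_\tau$ entering the flat sections are given by correlators computed on an algebraic virtual class, so they are morphisms of $\QQ$-Hodge structures and respect parity, as noted in the earlier remark.

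The key steps, in order, are as follows.

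\textbf{Step (i).} Express $\Psi$ as a product of: pullbacks and pushforwards along $\mathrm{Bl}$, $j$, $\mathrm{Bl}_{|E}$ and $i$; cup products with polynomials in Hodge classes; and operators $\kappa^A_\tau$ or their fundamental-solution series, with coefficients in $R^*_{\mathrm{Bl},K}$.

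\textbf{Step (ii).} Observe that the block-diagonality properties are stable under compositions, sums, and $R^*_{\mathrm{Bl},K}$-linear combinations with scalar coefficients.

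\textbf{Step (iii).} Note that a morphism of $\QQ$-Hodge structures $f$ which is an isomorphism preserves the Hodge locus. Because $f$ is compatible with the Poincar\'e pairings up to the adjoint $f^\dagger$, which is also a Hodge morphism, it also preserves the orthogonal complement $T_{\QQ}$. This argument applies equally to maps in the $K$-span of Hodge morphisms, which is exactly what Theorem~\ref{Bl} gives.

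For the transcendental part, the cleanest route is to use the last sentence of Theorem~\ref{Bl} directly. Write $\Psi=\sum_\lambda c_\lambda f_\lambda$ with $c_\lambda\in R^*_{\mathrm{Bl},K}$ and $f_\lambda$ morphisms of $\QQ$-Hodge structures. Each $f_\lambda$ sends Hodge classes to Hodge classes. Its transpose $f_\lambda^\dagger$ is again a Hodge morphism, up to Tate twist, so $f_\lambda$ sends $T_\QQ$ into $T_\QQ$: if $u\perp$ Hodge classes, then $\langle f_\lambda u,h\rangle=\langle u,f_\lambda^\dagger h\rangle=0$. Preservation of Hochschild degree is likewise automatic for Hodge morphisms.

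The main obstacle is the even/odd splitting. A general Hodge morphism need not preserve parity of cohomological degree, and an odd-to-even shift could a priori be hidden in Iritani's construction. Here we will check that each building block shifts cohomological degree by an even amount. This holds for pullbacks, for pushforwards along the even-codimension maps $i$ and $j$ (both complex, so of real codimension $2r$ and $2$), for cup products with even-degree Chern classes, and for quantum products. The last point uses the remark that $\kappa_\tau$ respects parity, together with the fact that $\tau$ involves only Hodge classes, which lie in even degree. We expect the delicate part to be confirming, in Iritani's explicit formula, that no odd-degree classes enter through the Fourier transform or the change of variables $g$. Our first attempt will be to verify that $g$ only reparametrizes $\tau$ within the even Hodge locus.
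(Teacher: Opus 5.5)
Your Steps (i)--(ii) are the paper's argument. The paper's proof is the single observation that $\Psi$ is assembled from pull-backs, push-forwards and cup products with algebraic classes (the virtual fundamental cycle, Chern classes of $\cN_i$), to which the preceding lemma applies.

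Your alternative for the transcendental part is a genuinely different, purely formal route, and it is correct. You write $\Psi$ as an $R^*_{\mathrm{Bl},K}$-combination of $\QQ$-Hodge morphisms $f_\lambda$, using the last clause of Theorem~\ref{Bl}, and then use $\langle f_\lambda u,h\rangle=\langle u,f_\lambda^\dagger h\rangle$. The adjoint preserves Hochschild degree because the Poincar\'e pairing only pairs $H^{(k)}$ with $H^{(-k)}$, also on the Tate-twisted $Z$-summands. This is the same identity that gives the transcendental part of the preceding lemma.

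What you miss is that this route already settles everything, including the step you call the main obstacle. Since $p+q=(p-q)+2q$, each $H^{(k)}$ lies in $H^{\even}$ or $H^{\mathrm{odd}}$ according to the parity of $k$. So any map whose complexification preserves Hochschild degree automatically preserves the even/odd splitting. Your claim that a Hodge morphism need not preserve parity of cohomological degree is therefore false. There is no need to inspect the Fourier transform or the change of variables $g$.

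The trade-off is this. Your formal route makes the proposition an immediate corollary of the last clause of Theorem~\ref{Bl}. The paper's route (your Steps (i)--(ii)) goes back to Iritani's construction, which is what actually justifies that clause in the first place.
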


\begin{proof}
The only ingredients used in the construction of $\Psi$ are indeed pull-backs, push-forwards, and cup product with algebraic classes such as the virtual fundamental cycle or the Chern classes of the normal bundle of $Z$ in $Y$.
\end{proof}

Consequently, we obtain an isomorphism
$$\Phi ~~\colon T_{Y,R^*_{\mathrm{Bl},K}} \oplus \bigoplus_{k=1}^{r-1} T_{Z,R^*_{\mathrm{Bl},K}}(-k) \to T_{X,R^*_{\mathrm{Bl},K}}$$
by
taking the restriction of $\Psi$ to the transcendantal blocks.
Note that it is still in the $K$-linear span of morphisms of $\QQ$-Hodge structures.
Furthermore, we have the conjugation relation
\begin{eqnarray*}
\(\kappa^{Y,\mathrm{trans}} \oplus \bigoplus_{k=1}^{r-1} \kappa^{Z,\mathrm{trans}} \)_\tau & = & \Phi^{-1} \circ \kappa^{X,\mathrm{trans}}_{g(\tau)} \circ \Phi.
\end{eqnarray*}

\begin{pro}
We have an equality of $K$-algebras
$$G_{X,K} = \Phi \circ \(G_{Y,K} \oplus \bigoplus_{k=1}^{r-1} \widehat{G}_{Z,K}\) \circ \Phi^{-1}.$$
\end{pro}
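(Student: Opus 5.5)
The plan is to read off the equality of coefficient algebras directly from the conjugation relation
$$\Bigl(\kappa^{Y,\mathrm{trans}} \oplus \bigoplus_{k=1}^{r-1} \kappa^{Z,\mathrm{trans}} \Bigr)_\tau = \Phi^{-1} \circ \kappa^{X,\mathrm{trans}}_{g(\tau)} \circ \Phi,$$
and then to remove the two twists: the change of variables $g$ and the $R^*_{\mathrm{Bl},K}$-linear (rather than $K$-linear) nature of $\Phi$. By Lemma \ref{compl}, it suffices to prove $\widehat{G}_{X,K} = \Phi\circ(\widehat{G}_{Y,K}\oplus\bigoplus_k \widehat{G}_{Z,K})\circ\Phi^{-1}$, where all algebras are taken as coefficient algebras over $R^*_{\mathrm{Bl},K}$.

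\emph{Step 1: the substitution $g$.} Since $g$ is an invertible morphism of graded $K$-algebras of degree zero, each coefficient of $\kappa^{X,\mathrm{trans}}_{g(\tau)}$ in the variables $\mathfrak{q}^{1/s}, Q_Y, T_0,\dotsc,T_h$ is a finite $K$-linear combination of coefficients of $\kappa^{X,\mathrm{trans}}_\tau$. Finiteness holds because the grading and the formal topology make each graded piece involve only finitely many monomials, or at least a convergent combination in a finite-dimensional space, which is then automatically a finite combination. Applying the same argument to $g^{-1}$ gives $K[\kappa^{X,\mathrm{trans}}_{g(\tau)}]_{R^*_{\mathrm{Bl},K}} = \widehat{G}_{X,K}$. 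A degree-zero automorphism preserves the filtration by degree, so the induced map on coefficient spaces is triangular and invertible.

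\emph{Step 2: the conjugation by $\Phi$.} This is the main obstacle, since $\Phi$ has coefficients in $R^*_{\mathrm{Bl},K}$ and conjugation mixes coefficients. I would first try to show that $\Phi$ is actually constant, i.e.\ lies in $\mathrm{Hom}_K$ with no dependence on the formal variables, after restriction to transcendental blocks. The reason is degree: $\Phi$ is of degree zero and preserves cohomological degree on the pure pieces. If that fails, I would expand $\Phi=\sum_m \Phi_m U^m$ and $\Phi^{-1}=\sum_m \Phi'_m U^m$ with $K$-linear $\Phi_m,\Phi'_m$. The coefficient of $U^n$ in $\Phi^{-1}\kappa^X\Phi$ is then $\sum \Phi'_a\,\kappa^X_b\,\Phi_c$. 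To make the containment work, the $\Phi_m$ should themselves lie in the algebra generated by $\Phi_0\,(\cdot)\,\Phi_0^{-1}$-conjugates of the coefficients. Here I would use that $\Psi$ is built from the fundamental solutions of the quantum connection, whose coefficients are polynomials in the coefficients of $\kappa$, so that induction on degree yields both inclusions.

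\emph{Step 3: combine.} Once $\Phi$ (or its leading term $\Phi_0$, with the corrections absorbed into the algebra) is identified, the direct-sum structure of the left side shows that its coefficient algebra is exactly $\widehat{G}_{Y,K}\oplus\bigoplus_k\widehat{G}_{Z,K}$. Here it matters that the variables are shared but the blocks are independent; the identities on the blocks are recovered from the string equation, since each block contains a scalar multiple of the identity. Conjugating back and invoking Lemma \ref{compl} then gives the statement.
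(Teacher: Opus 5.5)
Your outline is the same as the paper's. You reduce to the hatted algebras by Lemma \ref{compl}, read off the conjugation relation, and remove $g$ by applying the change-of-variables argument to $g$ and $g^{-1}$. Your Step 1 is the paper's ``general fact''. Note that finiteness there comes from $g$ being a well-defined formal substitution, not from the grading: the $T_k$ with $\deg\alpha_k=2$ have degree zero, so each degree contains infinitely many monomials.

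\textbf{The gap is Step 2.} You are right that this is the crux. The paper's proof passes over it by reading the conjugation relation coefficientwise, as though $\Phi$ did not depend on the formal variables. However, neither of your two routes closes it.
\begin{itemize}
\item \textbf{Degree does not make $\Phi$ constant.} $R^*_{\mathrm{Bl},K}$ contains non-constant elements of degree zero (e.g.\ those $T_k$). Moreover, the initial datum already lies in $\mathrm{Hom}(\dotsc)[z]((\mathfrak{q}^{-1/s}))$. For instance, the entry $A$ in the proof of Proposition \ref{initial} is $e^{-\tau^\circ/z}$ (itself $\mathfrak{q}$-dependent) plus $\mathfrak{q}^{-k}$-corrections. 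These reduce to $\id$ only because of the middle-cohomology hypothesis there.
\item \textbf{The fallback is unsupported.} $\Phi$ is obtained from the Birkhoff factorization of $(\Phi^\circ)^{-1}M^{\mathrm{trans}}$. Even if the coefficients of $M$ are controlled by quantum products, $\Phi^\circ$ comes from the Mellin--Barnes transform and has nothing to do with the coefficients of $\kappa$.
\item \textbf{The mechanism fails even when such membership holds.} With matrix units $E_{ij}$, take a constant $B=E_{22}$ and $\Phi=1+UE_{12}$. Then $\Phi^{-1}B\Phi=E_{22}-UE_{12}$, whose coefficient algebra is strictly larger than that of $B$. This happens although $\Phi_0=1$ and $\Phi_1=E_{12}$ lies in the coefficient algebra of the left-hand side. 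Coefficient algebras are simply not invariant under conjugation by an operator depending on the formal variables, so no induction on degree can give both inclusions in general.
\end{itemize}

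\textbf{What is missing is information on $\Phi$ itself.} The paper only obtains it later, and under hypotheses. If $T^\epsilon_{Y,\QQ}$ and $T^\epsilon_{Z,\QQ}$ lie in middle degree, Proposition \ref{initial} together with Lemmas \ref{lem1} and \ref{lem2} shows that $\Phi$ is block-diagonal. For $r=2$ it moreover equals the constant $\Phi^\circ=\id\oplus\ci\cdot\id$. In that case $\Phi$ commutes with taking coefficients and with the block-diagonal algebra, so conjugating by it is harmless. This is the case used in Corollary \ref{cordec} and in the fourfold applications. It is the argument you should invoke, or the hypothesis you should add; outside it you would need a genuinely new input on $\Phi$.

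\textbf{A smaller point in Step 3.} The string equation produces the identity of the whole direct sum, not of each block. So it does not show that the coefficient algebra of $\kappa^{Y,\mathrm{trans}}\oplus\bigoplus_k\kappa^{Z,\mathrm{trans}}$ with shared variables is the full direct sum. For example, with $A_Y=U\cdot\id$ and $A_Z=U\cdot\id$ one gets $K\cdot\id$, not $K\oplus K$. Separating the blocks requires coefficients supported on a single block.
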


\begin{proof}
First, we recall that
$$\widehat{G}_{X,K}=G_{X,K} \textrm{ and } \widehat{G}_{Y,K}=G_{Y,K}.$$
by Lemma \ref{compl}.
Second, the above conjugation relation yields
$$K[\kappa^{X,\mathrm{trans}}_{g(\tau)}]_{R^*_{\mathrm{Bl},K}} = \Phi \circ \(K[\kappa^{Y,\mathrm{trans}}_{\tau}]_{R^*_{\mathrm{Bl},K}} \oplus \bigoplus_{k=1}^{r-1} K[\kappa^{Z,\mathrm{trans}}_{\tau}]_{R^*_{\mathrm{Bl},K}}\) \circ \Phi^{-1}.$$
Hence, it remains to prove the equality
$$K[\kappa^{X,\mathrm{trans}}_{g(\tau)}]_{R^*_{\mathrm{Bl},K}}=K[\kappa^{X,\mathrm{trans}}_{\tau}]_{R^*_{\mathrm{Bl},K}},$$
which is indeed a general fact regarding a change of variables.
%
\end{proof}

We recall from \cite[Section 5.8]{Iritani} that the isomorphism $\Psi$, and thus $\Phi$, can be reconstructed from initial datas.
Specifically, we denote by
$$\Phi^\circ:=\Phi_{|Q=\tau=0} \in \mathrm{Hom}(T_{Y,K} \oplus \bigoplus_{k=1}^{r-1}T_{Z,K}(-k),T_{X,K})[z]((\mathfrak{q}^{-1/s}))$$
the initial condition and by
$$M \in \mathrm{End}(H^*(Y,\QQ) \oplus \bigoplus_{k=1}^{r-1} H^{*-2k}(Z,\QQ))[z^{-1}]\otimes_\QQ R^*_{\mathrm{Bl},\QQ}$$
the fundamental solution written in \cite[Page 61]{Iritani} and relying on the fundamental solutions of the genus-zero GW theories of $Y$ and of $Z$.
Note that it is diagonal by blocks with respect to the decomposition
$$H^*(Y,\QQ) \oplus \bigoplus_{k=1}^{r-1} H^{*-2k}(Z,\QQ)$$
and that it also respects the decomposition into Hodge/transcendantal classes.
Note also that a classical property of the fundamental solution in GW theory is
$$M = \id + O(z^{-1}).$$
We denote by
$$M^\mathrm{trans} \in \mathrm{End}(T_{Y,\QQ} \oplus \bigoplus_{k=1}^{r-1}T_{Z,\QQ}(-k))[z^{-1}]\otimes_\QQ R^*_{\mathrm{Bl},\QQ}$$
its restriction to the transcendantal parts.
We then set
$$L:= \(\Phi^\circ\)^{-1} M^\mathrm{trans} \in \mathrm{End}(T_{Y,K} \oplus \bigoplus_{k=1}^{r-1}T_{Z,K}(-k))[z,z^{-1}]\otimes_K R^*_{\mathrm{Bl},K},$$
where we use the classical isomorphism
$$T_{X,R^*_{\mathrm{Bl},K}} \to T_{Y,R^*_{\mathrm{Bl},K}} \oplus \bigoplus_{k=1}^{r-1} T_{Z,R^*_{\mathrm{Bl},K}}(-k)$$
inherited from Equation \eqref{class} and tensored with $R^*_{\mathrm{Bl},K}$.

The Birkhoff factorization enables us to write uniquely the operator $L$ as
$$L=:L_- \circ L_+$$
where $L_+$ corresponds to non-negative exponents in the variable $z$.
At last, we recover the isomorphism $\Phi$ as the non-negative part
$$\Phi = L_+.$$

\begin{lem}\label{lem1}
If the initial data $\Phi^\circ$ is diagonal by blocks with respect to the decomposition
$$T_{Y,K} \oplus \bigoplus_{k=1}^{r-1}T_{Z,K}(-k)$$
(once we identify $T_{X,K}$ with $T_{Y,K} \oplus \bigoplus_{k=1}^{r-1}T_{Z,K}(-k)$ by Equation \eqref{class}), then the same property holds for the isomorphism $\Phi$.
\end{lem}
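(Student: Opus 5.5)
The plan is to trace block-diagonality through the construction $\Phi = L_+$, where $L = (\Phi^\circ)^{-1} M^{\mathrm{trans}}$. Fix the decomposition
$$W := T_{Y,K} \oplus \bigoplus_{k=1}^{r-1}T_{Z,K}(-k),$$
and let $\mathcal{A} \subset \mathrm{End}(W)$ be the subalgebra of block-diagonal endomorphisms. The proof has three steps.

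\emph{Step 1: $L$ is block-diagonal.} Under the identification of $T_{X,K}$ with $W$ through Equation \eqref{class}, the hypothesis says $\Phi^\circ \in \mathcal{A}[z]((\mathfrak{q}^{-1/s}))$. Since $\Phi^\circ$ is invertible, its inverse is also block-diagonal, because the inverse of a block-diagonal operator is computed block by block. The fundamental solution $M$ is block-diagonal with respect to
$$H^*(Y,\QQ) \oplus \bigoplus_{k} H^{*-2k}(Z,\QQ),$$
and it respects the Hodge/transcendental decomposition, so its restriction $M^\mathrm{trans}$ is block-diagonal on $W$. Therefore $L$ has all its coefficients, in $z^{\pm 1}$ and in the formal variables of $R^*_{\mathrm{Bl},K}$, lying in $\mathcal{A}$.

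\emph{Step 2: block-diagonality passes to the Birkhoff factors.} This is the key step. Because $\mathcal{A} = \prod_b \mathrm{End}(W_b)$ is a product of matrix algebras, $L = (L^{(b)})_b$ splits into its blocks $L^{(b)}$. Whenever each block admits a Birkhoff factorization $L^{(b)} = L^{(b)}_- L^{(b)}_+$, the operators $\bigoplus_b L^{(b)}_-$ and $\bigoplus_b L^{(b)}_+$ give a Birkhoff factorization of $L$ with the correct normalizations. The normalization is that $L_-$ has only negative powers of $z$ apart from the identity term, and $L_+$ has only non-negative powers. By uniqueness of the Birkhoff factorization, which the paper invokes when it writes $L=:L_-\circ L_+$, this block factorization must be \emph{the} factorization. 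Hence $\Phi = L_+ = \bigoplus_b L^{(b)}_+$ is block-diagonal.

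\emph{Step 3: existence of the blockwise factorizations.} The main obstacle is justifying that each block $L^{(b)}$ has a Birkhoff factorization at all; a priori this is only known for the whole $L$. I would deduce it from the factorization of $L$ as follows. Let $\pi_b$ be the projection onto $W_b$ and $\iota_b$ the inclusion. Writing $L = L_- L_+$, the components in $\mathcal{A}$ satisfy
$$L^{(b)} = \sum_{c} (L_-)_{bc} (L_+)_{cb}.$$
This expression by itself does not produce a factorization of $L^{(b)}$, so a cleaner route is needed. Alternatively, I would use the uniqueness argument symmetrically. Let $D$ range over block-scalar operators $D = \sum_b \lambda_b \, \iota_b \pi_b$, with $\lambda_b \in K^\times$ constants. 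Such a $D$ commutes with $L$, so
$$L = D L D^{-1} = (D L_- D^{-1})(D L_+ D^{-1}).$$
Conjugation by a constant $D$ preserves both the $z$-degree conditions and the normalization of $L_-$. Uniqueness then gives $D L_\pm D^{-1} = L_\pm$ for all such $D$. An operator commuting with every block-scalar operator is block-diagonal, since its off-diagonal blocks are multiplied by $\lambda_b/\lambda_c \neq 1$ under conjugation. Hence $L_+ = \Phi$ is block-diagonal.

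This commutant argument avoids needing separate existence statements for each block, and I expect it to be the cleanest path. It relies only on uniqueness of the Birkhoff factorization in the relevant completed ring, with coefficients in $K[z^{\pm1}]$ together with the formal variables $\mathfrak{q}^{\pm 1/s}$, $Q_Y$ and $T_i$.
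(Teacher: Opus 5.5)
Your proposal is correct and follows the paper's route. The paper's one-line proof argues exactly that $L=(\Phi^\circ)^{-1}M^{\mathrm{trans}}$ is block-diagonal because $M^{\mathrm{trans}}$ is, so the (unique) Birkhoff factorization gives a block-diagonal $L_+=\Phi$; your Step~3, conjugating by constant block-scalar operators, is a clean way to make that uniqueness step precise without needing blockwise existence.
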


\begin{proof}
It follows from the Birkhoff factorization because the matrix $M^\mathrm{trans}$ is also diagonal by blocks with respect to the decomposition
$T_{Y,\QQ} \oplus \bigoplus_{k=1}^{r-1}T_{Z,\QQ}(-k).$
\end{proof}

\begin{lem}\label{lem2}
Assume that the initial data $\Phi^\circ$ is diagonal by blocks as in the previous lemma and furthermore that it acts by scalars
$$\lambda_Y, \lambda_{Z,1}, \dotsc, \lambda_{Z,r-1} \in K((\mathfrak{q}^{-1/s}))$$
on the respective blocks.
Then we have
$$\Phi = \Phi^\circ.$$
In particular, it acts by scalars on each blocks.
\end{lem}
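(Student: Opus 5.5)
We argue directly from the reconstruction recalled above, $\Phi = L_+$ with $L = (\Phi^\circ)^{-1} M^\mathrm{trans}$ and $L = L_- \circ L_+$ its Birkhoff factorization. By Lemma \ref{lem1}, $\Phi$ is already diagonal by blocks, so we may work one block at a time. On a given block, say the $Y$-block, $\Phi^\circ$ is the scalar $\lambda_Y$ times the identity. Note that $\lambda_Y$ is independent of $z$ and is an invertible element of $K((\mathfrak{q}^{-1/s}))$, since $\Phi^\circ$ is an isomorphism. Hence $(\Phi^\circ)^{-1}$ is simply multiplication by $\lambda_Y^{-1}$, which commutes with everything, and on that block
$$L = \lambda_Y^{-1} M^\mathrm{trans}_Y, \qquad M^\mathrm{trans}_Y = \id + O(z^{-1}).$$

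The key step is to exhibit the Birkhoff factorization explicitly and invoke uniqueness. Set
$$L_- := M^\mathrm{trans}_Y, \qquad L_+ := \lambda_Y^{-1}\,\id.$$
Then $L_-$ has only non-positive powers of $z$ and is normalized, $L_- = \id + O(z^{-1})$. Also $L_+$ involves only $z^0$ and is invertible. Since $\lambda_Y^{-1}$ is central, $L = L_- \circ L_+$. Uniqueness of the Birkhoff factorization, with the normalization $L_- = \id + O(z^{-1})$ implicit in ``writes uniquely'', then forces this to be the factorization.

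The same argument applies verbatim to each block $T_{Z,K}(-k)$ with the scalars $\lambda_{Z,k}$. Assembling the blocks gives $\Phi$ scalar on each block, with scalars determined by the $\lambda$'s.

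\textbf{Main obstacle: reconciling conventions.} Read literally, $L = (\Phi^\circ)^{-1} M^\mathrm{trans}$ and $\Phi = L_+$ would give $\Phi = (\Phi^\circ)^{-1}$ rather than $\Phi^\circ$. So I will check against \cite[Section 5.8]{Iritani} which inverse and which composition order are intended. I expect the intended relation to be $L = (M^\mathrm{trans})^{-1}\Phi^\circ$, or equivalently $\Phi$ recovered as the inverse of the positive part. In that reading the same computation yields $L_+ = \lambda_Y\,\id = \Phi^\circ$, as claimed. The point is insensitive to these conventions: a $z$-independent central scalar passes untouched through the Birkhoff factorization and ends up entirely in the positive part.

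A secondary check is that the factorization is well-posed over the coefficient ring $K((\mathfrak{q}^{-1/s}))[[Q_Y,T]]$. This matters only for uniqueness, and uniqueness follows formally. If $L_-L_+ = L_-'L_+'$, then $(L_-')^{-1}L_- = L_+'L_+^{-1}$ has only non-positive powers of $z$ on the left and only non-negative powers on the right, so it is independent of $z$. Its leading term on the left is $\id$, so it equals $\id$, which gives $L_- = L_-'$ and $L_+ = L_+'$.
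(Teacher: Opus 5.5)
Your proof is correct and is essentially the paper's: on each block you write $L=\lambda_Y^{-1}M^{\mathrm{trans}}=\lambda_Y^{-1}\id+O(z^{-1})$ and read off the non-negative Birkhoff factor as the scalar $\lambda_Y^{-1}\id$, and your explicit uniqueness check makes precise what the paper leaves implicit. The convention mismatch you flag is real, and the paper's own proof resolves it exactly as you propose: it identifies the non-negative part with $\Phi^{-1}$, so that $\Phi=\lambda_Y\,\id=\Phi^\circ$, even though the setup earlier states $\Phi=L_+$.
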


\begin{proof}
It is again a consequence of the Birkhoff factorization.
Indeed, on the block $T_{Y,K}$, we have
$$L_{|T_{Y,K}} = \lambda_Y^{-1} M_{|T_{Y,K}} = \lambda_Y^{-1} \id + O(z^{-1})$$
so that the non-negative part in $z$ is $\Phi^{-1} = \lambda_Y^{-1} \id$.
The same happens on the other blocks $T_{Z,K}(-k)$.
\end{proof}

We show that, under some condition, the assumptions of Lemmas \ref{lem1} and \ref{lem2} are indeed satisfied.
\begin{pro}\label{initial}
Let $\epsilon \in \left\lbrace +,- \right\rbrace$.
We assume that
\begin{itemize}
\item $T_{Y,\QQ}^\epsilon$ and $T_{Z,\QQ}^\epsilon$ lie in their respective middle cohomologies.
\end{itemize}
In particular, the restriction of $T_{Y,\QQ}^\epsilon$ to $Z$ vanishes.
Then the initial condition $\Phi^\circ$ is diagonal by blocks with respect to the two blocks in the decomposition
$$T_{Y,K}^\epsilon \oplus \(\bigoplus_{k=1}^{r-1}T_{Z,K}^\epsilon(-k)\)$$
and it acts as the identity on the first block $T_{Y,K}^\epsilon$.
Furthermore, if $r=2$, then the initial condition $\Phi^\circ$ acts by the imaginary unit $\ci$ on the second block $T_{Z,K}^\epsilon(-1)$.
\end{pro}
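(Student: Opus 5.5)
Our plan is to compute $\Phi^\circ$ directly from Iritani's explicit description of the initial data in \cite[Section 5.8]{Iritani}. Setting $Q=\tau=0$, the map $\Psi$ is the composition of an explicit cohomological map with Gamma-function factors. On the summand $H^*(Y)$ this map is built from $\mathrm{Bl}^*$, corrected by terms that factor through restriction to $Z$, namely $i^*$ followed by pushforward along $E$ and cup products with powers of $\xi$ and Chern classes of $\cN_i$. On each summand $H^{*-2k}(Z)$ it is the Mellin--Barnes/Fourier-type sum $\sum_{j} c_{k,j}(\mathfrak{q},z)\, j_*\bigl(\xi^{j}\cup \mathrm{Bl}_{|E}^*(\cdot)\bigr)$. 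The coefficients $c_{k,j}$ are obtained from the Gamma class of the normal bundle and from the $s$-th roots of unity. We restrict everything to $T^\epsilon$, using the preceding proposition that $\Psi$ preserves the Hodge/transcendental and even/odd decompositions.

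\emph{Step 1 (the $Y$-block).} For $u\in T_{Y,K}^\epsilon$, the middle-degree hypothesis gives $\deg u=\dim Y$, while $i^*u\in H^{\dim Y}(Z)$ with $\dim Z<\dim Y$. The restriction $i^*u$ is a transcendental class, because $i^*$ preserves the decomposition. It must vanish: either its degree exceeds $2\dim Z$, or else it lands in a degree different from the middle degree of $Z$, where $T_Z^\epsilon$ is zero by hypothesis. Hence every correction term of Iritani's formula on $Y$ dies, since each factors through $i^*$. What remains is $\mathrm{Bl}^*u$ times the Gamma-class factor. That factor acts on $u$ through cup products with Hodge classes of positive degree, and these kill a middle-degree transcendental class after projecting back to $T_X$. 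So the $Y$-block is the identity, and the $Y\to Z$ off-diagonal component vanishes.

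\emph{Step 2 (the $Z$-blocks).} For $v\in T_{Z,K}^\epsilon$ in the middle degree of $Z$, the image is a combination of $j_*(\xi^{j}\cup \mathrm{Bl}_{|E}^* v)$. Cup products with $c(\cN_i)$ or with Gamma classes, applied to $v$, produce higher-degree classes on $Z$. These are transcendental, yet $T_Z^\epsilon$ vanishes outside the middle degree, so after projection only the pure $\xi^j$ terms survive. The $Y$-component is $\mathrm{Bl}_*j_*(\cdots)=i_*(\cdots)$. We must show it does not meet $T_Y^\epsilon$ in the off-diagonal way: by degrees, $i_*v$ has degree $\deg v+2r=\dim Y$, which is the middle degree. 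We then argue via the blow-up decomposition \eqref{class}, which is compatible with $\Psi$ at the level of the classical isomorphism. Iritani's $Z$-summands map into $\ker \mathrm{Bl}_*$ modulo terms in $\mathrm{Bl}^*H(Y)$ that vanish here. This yields block-diagonality with respect to $T_Y^\epsilon\oplus\bigoplus_k T_Z^\epsilon(-k)$.

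\emph{Step 3 ($r=2$).} There is a single $Z$-block and $s=1$. The coefficient of $j_*\mathrm{Bl}_{|E}^* v$ is then a scalar of $\mathfrak{q}$ and $z$, obtained by evaluating Iritani's sum over roots of unity, where only the $k=1$ term survives by the degree argument. We expect to find $\ci$ times a normalized power of $\mathfrak{q}^{-1/s}$, and to absorb the latter into the identification by the degree constraint that $\Phi^\circ$ has degree zero. The main obstacle is this explicit bookkeeping of Iritani's normalization constants, i.e.\ the Gamma factors and the $(2\pi\ci)$ powers; it is the one place requiring a genuine computation rather than degree vanishing.
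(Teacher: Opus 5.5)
Your overall strategy matches the paper's: read off $\Phi^\circ$ from Iritani's explicit initial data (his Equations 5.44 and 5.46). Then kill everything using the fact that a pure middle-degree transcendental class is annihilated by cup product with positive-degree Hodge classes and restricts to zero on $Z$. Step 1 is correct in this form; the surviving prefactor is $e^{-\tau^\circ/z}$, with $\tau^\circ$ of positive degree.

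Step 2, however, contains a degree error that derails the argument for the off-diagonal block from the $Z$-summands to $T^\epsilon_Y$. For $v\in T^\epsilon_{Z,\QQ}$ of degree $\dim Z=\dim Y-r$, the class $i_*v$ has degree $\dim Y+r$, not $\dim Y$. Because you believe it sits in the middle degree, you fall back on the claim that Iritani's $Z$-summands land in $\ker\mathrm{Bl}_*$ ``modulo terms in $\mathrm{Bl}^*H(Y)$ that vanish here''. That claim assumes exactly what must be shown. In Iritani's formula this block is literally $e^{-\tau^\circ/z}\sum_{k>0}\mathfrak{q}^{-k}\, i_*\bigl(\cdots\,\phi_{Z,m}\bigr)$, which is not zero in general. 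The correct, easy reason is that $i_*$ preserves transcendence and parity, so $i_*v$ is an element of $T^\epsilon_{Y,\QQ}$ of degree $\dim Y+r\neq\dim Y$, hence zero. The Chern-class corrections vanish because they first cup $v$ with positive-degree Hodge classes.

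The genuine gap is Step 3. It is the only non-formal part of the statement, and you leave it as an expectation. The claim is that $\Phi^\circ$ acts by exactly $\ci$, so a power of $\mathfrak q$ cannot be ``absorbed'' into the identification. Nor does the degree-zero constraint pin down the scalar, since $\mathfrak q/z$ has degree zero when $r=2$; indeed $e^{-\mathfrak q/z}$ does appear. What is needed is the following.
(a) The prefactor $e^{-\sigma^\circ_0/z}\mathfrak q^{\rho_Z/z}$ acts trivially on $T^\epsilon_Z$, because $\rho_Z=c_1(\cN_i)$ and $\sigma^\circ_0=\ci\pi\rho_Z+O(\mathfrak q^{-1})$ have no degree-zero component. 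So $D$ reduces to $\cF_{Z,0}(c_{0,m})$.
(b) On $T^\epsilon_Z$ the Chern roots of $\cN_i$ act by zero, so $G_Z=(2\pi)^{-3/2}(-z)^{\lambda/z-3/2}\Gamma(\lambda/z)^2\Gamma(-\lambda/z)$.
(c) Set $x=\lambda/z$, $y=-z/\mathfrak q$, $\psi(x)=\Gamma(x)^2\Gamma(-x)$, $h(y)=\int y^x\psi(x)\,dx$ and $\partial=y\,d/dy$. The functional equation $-(x+1)\psi(x+1)=x^2\psi(x)$ gives $\partial^2h=-y^{-1}\partial h$, so the relevant Mellin--Barnes integral $\partial h$ equals $\mathrm{Cst}\cdot e^{-\mathfrak q/z}$. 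This exactly cancels the normalisation $e^{c_Z\lambda_0/z}=e^{-\mathfrak q/z}$ in Iritani's asymptotic formula (4.9), leaving a constant multiple of $\phi_{Z,m}$.
(d) That constant is identified with the leading term $q_{Z,0}\lambda_0=\ci$ of Iritani's (4.10).
Without (a)--(d), the $r=2$ assertion is not established, and it is exactly what is used later for the integral isometry statements.
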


\begin{proof}
We use notations from \cite{Iritani}.
According to \cite[Equation 5.44,5.46]{Iritani}, written in the basis
$$\(\mathrm{pr}^*(\phi_{Y,i}),(j_*(\xi^l\mathrm{pr}_{|E}^*\phi_{Z,m}))_{0 \leq l \leq r-2}\)$$
of $T_{X,\QQ}^\epsilon$, we have the matrix
$$\Phi^\circ = \begin{bmatrix}
A & B \\
C_j & D_j \\
\end{bmatrix},$$
with
\begin{eqnarray*}
A(\mathrm{pr}^*(\phi_{Y,i})) & = & e^{-\tau^\circ/z} \phi_{Y,i} + e^{-\tau^\circ/z} \sum_{k>0} \mathfrak{q}^{-k} i_*\(\cfrac{\prod_{\nu=1}^{k-1}e_{\nu z}(\cN_i)}{k!z^k}\) \cup \phi_{Y,i}, \\
B(j_*(\xi^l\mathrm{pr}_{|E}^*\phi_{Z,m})) & = & e^{-\tau^\circ/z} \sum_{k>0} \mathfrak{q}^{-k} i_*\(\cfrac{\prod_{\nu=1}^{k-1}e_{\nu z}(\cN_i)}{k!z^k}(-1)^l(kz)^{l+1}\phi_{Z,m}\), \\
C_j(\mathrm{pr}^*(\phi_{Y,i})) & = & e^{-((r-1)\lambda_j+\varsigma^\circ_j)/z}\mathfrak{q}^{\rho_Z/((r-1)z)} \cF_{Z,j}(c_i), \\
D_j(j_*(\xi^l\mathrm{pr}_{|E}^*\phi_{Z,m})) & = & e^{-((r-1)\lambda_j+\varsigma^\circ_j)/z}\mathfrak{q}^{\rho_Z/((r-1)z)} \cF_{Z,j}(c_{l,m}).
\end{eqnarray*}
Since $T_{Y,\QQ}^\epsilon$ is pure-degree, then $x \cup \phi_{Y,i}$ vanishes for any positive-degree class $x$, so that we get
\begin{eqnarray*}
A & = & e^{-\tau^\circ/z} \id \\
& = & \exp\(-\frac{1}{z} \cdot [z^{-1}] \log \(1+\sum_{k>0} \mathfrak{q}^{-k} i_*\(\cfrac{\prod_{\nu=1}^{k-1}e_{\nu z}(\cN_i)}{k!z^k}\)\)\) \id \\
& = & \id.
\end{eqnarray*}
Recall the continuous Fourier transform from \cite[Equation 4.7]{Iritani}:
$$\mathbf{f} \to \int e^{\lambda \log S_Z/z} G_Z \mathbf{f}_{|Z} d\lambda.$$
This is the Mellin--Barnes integral and the operator $\cF_{Z,j}$ is then deduced from it.
Since this integral uses the restriction of $\mathbf{f}$ to $Z$ and since the restriction of $T_{Y,\QQ}^\epsilon$ to $Z$ is zero, then we get $\cF_{Z,j}(c_i)=0$ and thus
$$C_j=0.$$
Since $T_{Y,\QQ}^\epsilon$ restricted to $Z$ is zero, then we have $i_*(\phi_{Z,m})=0$.
Since $T_{Z,\QQ}^\epsilon$ is pure-degree, then $x \cup \phi_{Z,m}$ vanishes for any positive-degree class $x$.
Therefore, we get
$$B=0.$$
So far, we have proven
$$\Phi^\circ = \begin{bmatrix}
\id & 0 \\
0 & D_j \\
\end{bmatrix},$$
which is the first part of Proposition \ref{initial}.
Now, we assume that $r=2$ and we aim at proving that $D=\ci \cdot \id$.
We have
\begin{eqnarray*}
D(j_*(\mathrm{pr}_{|E}^*\phi_{Z,m})) & = & e^{-(\lambda_0+\varsigma^\circ_0)/z}\mathfrak{q}^{\rho_Z/z} \cF_{Z,0}(c_{0,m}) \\
& = & e^{-\sigma^\circ_0/z}\exp\(\frac{\log \mathfrak{q}}{z} c_1(\cN_i)\) \cF_{Z,0}(c_{0,m}),
\end{eqnarray*}
because $\lambda_0+\varsigma^\circ_0=\sigma^\circ_0$ and $\rho_Z = c_1(\cN_i)$.
Observe that $\sigma^\circ_0$ has no component of cohomological degree zero.
Indeed, by \cite[Equation 5.17]{Iritani}, we have
$$\sigma^\circ_0 = h_{Z,0} + O(\mathfrak{q}^{-1}),$$
with $h_{Z,0}=\ci \pi \rho_Z$ a degree-two class.
Since it is homogeneous of degree two and $\deg(\mathfrak{q}^{-1})=-2$, then the term $O(\mathfrak{q}^{-1})$ must involved cohomology classes of positive degrees.
Since the cup product of a class of positive degree with a class of $T^\epsilon_{Z,\QQ}$ vanishes, then we deduce
$$D(j_*(\mathrm{pr}_{|E}^*\phi_{Z,m})) = \cF_{Z,0}(c_{0,m}).$$
We also obtain
\begin{eqnarray*}
G_Z & = & \cfrac{(-z)^{-\lambda/z}}{(\sqrt{-2\pi z})^3} \Gamma(-\lambda/z) \prod_{\substack{\epsilon \\ \textrm{root of $\cN_i$}}} (-z)^{(\lambda-\epsilon)/z} \Gamma\(\frac{\lambda-\epsilon}{z}\) \\
& = & (-z)^{-\lambda/z}(-2\pi z)^{-3/2} \Gamma(-\lambda/z)  \((-z)^{\lambda/z} \Gamma(\lambda/z)\)^2 \\
& = & (2\pi)^{-3/2} (-z)^{\lambda/z-3/2}  \Gamma(\lambda/z)^2 \Gamma(-\lambda/z)
\end{eqnarray*}
on $T^\epsilon_{Z,\QQ}$, because all non-equivariant Chern roots of the normal bundle $\cN_i$ vanish once we take the cup product with a class in $T^\epsilon_{Z,\QQ}$.
Recall \cite[Equation 4.9]{Iritani}
$$\int e^{\lambda \frac{\log S_Z}{z}} G_Z \cdot  (c_{0,m})_{|Z} d\lambda \sim \sqrt{2 \pi z} \cdot e^{c_Z \lambda_0/z} \cF_{Z,0}(c_{0,m}),$$
where the equivalence means the formal asymptotic expansion as $z \to 0$.
We have $S_Z = \mathfrak{q}^{-1}$ and $(c_{0,m})_{|Z}=\lambda \cdot \phi_{Z,m}$.

Set $x=\lambda/z$ and $y=-z/\mathfrak{q}$.
In particular, we get
$$e^{\lambda \frac{\log S_Z}{z}} = \mathfrak{q}^{-x}.$$
We obtain
$$\int e^{\lambda \log S_Z /z} G_Z \cdot  (c_{0,m})_{|Z} d\lambda = \frac{\ci}{(2 \pi)^{3/2}} \cdot z^{1/2} \int y^x x \Gamma(x)^2 \Gamma(-x) dx \cdot \phi_{Z,m}.$$
Introduce the functions $\psi(x):=\Gamma(x)^2 \Gamma(-x)$ and $h(y)=\int y^x \psi(x) dx$.
Observe that
$$-(x+1)\psi(x+1)=x^2\psi(x).$$
Introduce the differential operator $\partial := y \frac{d}{dy}$.
Then we have
$$\partial^2 h = \int y^x x^2 \psi(x) dx = - y^{-1} \partial h.$$
Set $k := \partial h$.
It is a solution of the differential equation
$$y \frac{d}{dy}(k) = -\frac{k}{y}$$
so that
$$k = \mathrm{Cst} e^{1/y} = \mathrm{Cst}~ e^{-\mathfrak{q}/z}.$$
Therefore, we obtain
\begin{eqnarray*}
\sqrt{2 \pi z} \cdot e^{c_Z \lambda_0/z} \cF_{Z,0}(c_{0,m}) & \sim & \frac{\ci}{(2 \pi)^{3/2}} \cdot z^{1/2} k(y) \cdot \phi_{Z,m} \\
& \sim & \frac{\ci}{(2 \pi)^{3/2}} \cdot z^{1/2} \mathrm{Cst} ~ e^{-\mathfrak{q}/z} \cdot \phi_{Z,m}.
\end{eqnarray*}
Since we have $\sqrt{2 \pi z} \cdot e^{c_Z \lambda_0/z} = \sqrt{2 \pi z} \cdot e^{-\mathfrak{q}/z}$, then we obtain
$$\cF_{Z,0}(c_{0,m}) \sim \frac{\ci}{4 \pi^2} \cdot \mathrm{Cst} \cdot \phi_{Z,m}.$$
Consequently, the matrix $D$ is a multiplication by a constant, which equals the dominant term in \cite[Equation 4.10]{Iritani}, that is $q_{Z,0} \lambda_0 = \ci$, the imaginary unit.
Hence, we get
$$\Phi^\circ = \begin{bmatrix}
\id & 0 \\
0 & \ci \cdot \id \\
\end{bmatrix},$$
finishing the proof.
\end{proof}


\begin{cor}\label{cordec}
Let $\epsilon \in \left\lbrace +,- \right\rbrace$.
We assume that
\begin{itemize}
\item $T_{Y,\QQ}^\epsilon$ and $T_{Z,\QQ}^\epsilon$ lie in their respective middle cohomologies.
\end{itemize}
Under the classical decomposition $T_X^\epsilon = T_Y^\epsilon \oplus \bigoplus_{k=1}^{r-1} T_Z^\epsilon(-k)$, we have the equality
$$G_{X,K}^\epsilon = G_{Y,K}^\epsilon \oplus \(\Phi \circ \(\bigoplus_{k=1}^{r-1} {\widehat{G}_{Z,K}}^\epsilon\) \circ \Phi^{-1}\).$$
Furthermore, if $r=2$, then we have
$$G_{X,K}^\epsilon = G_{Y,K}^\epsilon \oplus \widehat{G}_{Z,K}^\epsilon.$$
\end{cor}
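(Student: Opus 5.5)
Under the hypothesis on middle cohomology, Proposition~\ref{initial} shows that the initial condition $\Phi^\circ$ is block-diagonal with respect to the two blocks $T_{Y,K}^\epsilon$ and $\bigoplus_{k=1}^{r-1}T_{Z,K}^\epsilon(-k)$, and that it acts as the identity on the first block. The plan is to propagate this from $\Phi^\circ$ to $\Phi$, and then to feed the resulting structure of $\Phi$ into the conjugation formula for the algebras proved just before.

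First step: the block structure of $\Phi$. Lemma~\ref{lem1} is stated for the full decomposition, but its proof only uses that the fundamental solution $M^\mathrm{trans}$ is block-diagonal and that Birkhoff factorization is unique. So the same argument applies to the coarser two-block decomposition. We then get $L=(\Phi^\circ)^{-1}M^\mathrm{trans}$ block-diagonal, hence $L_+$ block-diagonal, and therefore $\Phi$ is block-diagonal in the two blocks.

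Second step: $\Phi$ on the $Y$-block. There $L=M=\id+O(z^{-1})$, exactly as in the proof of Lemma~\ref{lem2} with $\lambda_Y=1$. So the $Y$-block of $\Phi$ is the identity.

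Third step: the algebra. Restrict the equality
$$G_{X,K} = \Phi \circ \(G_{Y,K} \oplus \bigoplus_{k=1}^{r-1} \widehat{G}_{Z,K}\) \circ \Phi^{-1}$$
to the $\epsilon$-parts. This is legitimate because $\Psi$, and hence $\Phi$, respects the even/odd decomposition. Conjugating by a block-diagonal map whose first block is $\id$ gives
$$G_{X,K}^\epsilon = G_{Y,K}^\epsilon \oplus \(\Phi \circ \(\bigoplus_{k=1}^{r-1}\widehat{G}^\epsilon_{Z,K}\) \circ \Phi^{-1}\),$$
which is the first claim. One should check here that $\Phi$ is $R^*_{\mathrm{Bl},K}$-linear, so that conjugation commutes with taking coefficients. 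Strictly, the displayed equality should be read coefficientwise, using that $\Phi$ has scalar blocks or at least is independent of the variables on each block.

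Fourth step: the case $r=2$. There is then a single $Z$-block, and Proposition~\ref{initial} gives that $\Phi^\circ$ acts on it by the scalar $\ci$. Lemma~\ref{lem2} then yields $\Phi=\Phi^\circ$, which is $\ci\cdot\id$ on that block. Conjugation by a scalar is trivial, so $\Phi\circ\widehat{G}^\epsilon_{Z,K}\circ\Phi^{-1}=\widehat{G}^\epsilon_{Z,K}$, giving the second claim.

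Main obstacle: for $r>2$, the factor $\Phi$ on the $Z$-blocks is not constant in the formal variables. The first formula therefore has to be understood with $\Phi$ kept inside the coefficient extraction, exactly as in the preceding proposition, rather than as a literal conjugation of $K$-algebras. The real content lies in showing that the off-diagonal $Y$/$Z$ terms vanish, and that is supplied by Proposition~\ref{initial}.
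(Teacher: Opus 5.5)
Your proposal is correct and follows the paper's implicit argument. Proposition \ref{initial}, combined with the Birkhoff-factorization Lemmas \ref{lem1} and \ref{lem2} (applied to the two-block splitting, with $\lambda_Y=1$, and with $\lambda_{Z,1}=\ci$ when $r=2$), makes $\Phi$ block-diagonal with identity on the $Y$-block; restricting the preceding conjugation formula for $G_{X,K}$ to the $\epsilon$-parts then gives both equalities, and your caveat about $\Phi$ depending on the formal variables for $r>2$ applies equally to the paper's own formulation.
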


\begin{rem}
Together with Lemma \ref{surface} and Proposition \ref{comp}, this corollary implies \cite[Theorem 3.3]{OpenAI}.
\end{rem}


\section{Birational geometry of fourfolds}
\subsection{Decomposition of the algebra}
We start with a structural lemma on $K$-algebras.

\begin{lem}\label{decomp}
Let $G$ be a $K$-algebra of finite dimension.
We denote by $\mathcal{S}_G \subset G$ the set of all non-zero elements $x \in G$ such that:
\begin{itemize}
\item $x^2=x$,
\item for any $y \in G$, we have $xy \in K \langle x \rangle$ and $yx \in K \langle x \rangle$.
\end{itemize}
Then the set $\mathcal{S}_G$ is finite, and denoting by $x_1, \dotsc, x_n$ its elements, they are linearly independant.
Furthermore, there is a decomposition of $K$-algebras
$$G = H \oplus \bigoplus_{k=1}^n \langle x_k \rangle,$$
where $H := \bigcap_{k=1}^n \ker(x_k \cdot (-))$, and such a decomposition is unique.
\end{lem}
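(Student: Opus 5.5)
The plan is to show first that distinct elements of $\mathcal{S}_G$ are mutually orthogonal central idempotents. Everything else then follows by the usual Peirce-type splitting, which I will carry out by hand because $G$ need not be unital.

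\emph{Orthogonality.} Take $x,y \in \mathcal{S}_G$. By the second defining property, $xy \in K\langle x\rangle \cap K\langle y\rangle$.
\begin{itemize}
\item If $x$ and $y$ are linearly independent, this intersection is zero, so $xy=0$, and likewise $yx=0$.
\item If $y = cx$ with $c \in K^\times$, then $c^2x = y^2 = y = cx$, so $c=1$ and $y=x$.
\end{itemize}
Hence distinct elements $x_k \neq x_l$ of $\mathcal{S}_G$ satisfy $x_kx_l = x_lx_k = 0$. Now suppose $\sum_k c_k x_k = 0$ is a finite relation among distinct elements. Multiplying on the left by $x_j$ gives $c_jx_j = 0$, so $c_j = 0$. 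Thus any finite family of distinct elements is linearly independent. Since $\dim_K G < \infty$, the set $\mathcal{S}_G$ is finite, of cardinality at most $\dim G$.

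\emph{Centrality.} Fix $y \in G$ and write $x_ky = a\,x_k$ and $yx_k = b\,x_k$ with $a,b \in K$. Then
$$x_kyx_k = (x_ky)x_k = a\,x_k^2 = a\,x_k \quad\text{and}\quad x_kyx_k = x_k(yx_k) = b\,x_k^2 = b\,x_k.$$
Since $x_k \neq 0$, we get $a=b$. So each $x_k$ is central, and each line $K\langle x_k\rangle$ is a two-sided ideal.

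\emph{Decomposition.} For $y \in G$, write
$$y = \Big(y - \sum_k x_ky\Big) + \sum_k x_ky.$$
The second summand lies in $\bigoplus_k K\langle x_k\rangle$. For the first, orthogonality gives
$$x_j\Big(y - \sum_k x_ky\Big) = x_jy - x_jy = 0,$$
so it lies in $H = \bigcap_k \ker(x_k\cdot(-))$. The sum is direct: if $\sum_k c_kx_k \in H$, left multiplication by $x_j$ gives $c_j = 0$. The space $H$ is a two-sided ideal, because the $x_k$ are central. Moreover $H\cdot x_k = x_k\cdot H = 0$, and $x_k x_l = 0$ for $k \neq l$. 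Therefore the vector-space decomposition is a decomposition of $K$-algebras, i.e. a direct product of ideals.

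\emph{Uniqueness and the main difficulty.} Uniqueness holds because every piece is intrinsically determined: $\mathcal{S}_G$ depends only on the algebra structure of $G$, and $H$ is then defined from it. The step I expect to need the most care is the centrality argument, namely that the left and right scalars $a$ and $b$ coincide. Everything afterwards, in particular the splitting without a unit, relies on it.
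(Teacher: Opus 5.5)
Your proof is correct and follows essentially the paper's route. Both arguments get orthogonality of distinct elements of $\mathcal{S}_G$ from $xy \in K\langle x\rangle \cap K\langle y\rangle$, linear independence by multiplying a relation by $x_j$, and the vector-space splitting $y = (y - \sum_k x_k y) + \sum_k x_k y$, which the paper leaves implicit in ``since all $x_k$ are projectors''.

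The one real difference is your centrality step. The paper uses the same sandwich $x_k y x_k$, but only for $y \in H$, to show that left and right kernels of the $x_k$ agree. It then checks separately, by writing $y_1y_2 = y_3 + \sum_k a_k x_k$, that $H$ is closed under multiplication. Applying the sandwich to every $y$ makes each $x_k$ central, so $H$ is a two-sided ideal and the algebra splitting is immediate; this is cleaner.

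On uniqueness, the paper proves a little more than your remark that every piece is intrinsically determined, which only says the construction is canonical. The paper shows that if $G = H' \oplus \bigoplus_k K\langle x_k\rangle$ is any decomposition of $K$-algebras, then $x_k H' = 0$. Hence $H' \subseteq H$, and equality follows by comparing dimensions. You should add this line, since it is the substantive sense of uniqueness of the complement.
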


\begin{proof}
Let $x,x' \in \mathcal{S}_G$ such that $x \cdot x' \neq 0$. By definition, we have
$$x \cdot x' \in K \langle x \rangle \cap K \langle x' \rangle,$$
so that $x'=\lambda x$ for some $\lambda \in K^*$. Since $x^2=x$ and ${x'}^2=x'$, we get $\lambda=1$, so that $x=x'$.
Consequently, we obtain
$$\forall ~x \neq x' \in \mathcal{S}_G~,~~x\cdot x' = 0.$$
If we have a zero linear combination
$$\sum_k a_k x_k =0,$$
with $x_k \in \mathcal{S}_G$ and $a_k \in K$, then multiplying with $x_i$ gives $a_i=0$, so that elements of $\mathcal{S}_G$ are linearly independant and thus $\mathcal{S}_G$ is finite.
	
Define $H$ as the intersection of kernels of multiplications on the left by $x_1, \dotsc, x_n$.
Note that, for any $y \in H$ and any $1 \leq k \leq n$, there exists $\lambda \in K$ such that $y x_k = \lambda x_k$.
Hence, we have $y x_k = \lambda x_k^2 = x_k y x_k = 0$, so that $H$ is also the intersection of kernels of right multiplications by $x_1, \dotsc, x_n$.

Since all $x_k$ are projectors, we have a direct sum $G = H \oplus \bigoplus_{k=1}^n \langle x_k \rangle$ as $K$-vector spaces.
It remains to show that $H$ is stable under multiplication.

Let $y_1, y_2 \in H$ and write $y_1 y_2 = y_3 + a_1 x_1 \dotsb + a_n x_n$ with $a_k \in K$ and $y_3 \in H$.
Then for each $1 \leq k \leq n$ we have
$$0=(x_k y_1) y_2 = x_k (y_1y_2) = a_k x_k,$$
so that $a_1=\dotsc=a_n=0$ and $y_1y_2 \in H$.

Conversely, if we have a decomposition of $K$-algebras $G = H \oplus \bigoplus_{k=1}^n \langle x_k \rangle$, then for any $1 \leq k \leq n$ and for any $y \in H$, we have $x_k y = 0$, so that $H$ is indeed the intersection of kernels as in the statement.
\end{proof}

\begin{pro}\label{sum}
Let $G_1$ and $G_2$ be two $K$-algebras, whose decompositions given by Lemma \ref{decomp} are
$$G_1 = H_1 \oplus \bigoplus_{k=1}^{n_1} \langle x_k \rangle \textrm{ and } G_2 = H_2 \oplus \bigoplus_{k=1}^{n_2} \langle y_k \rangle.$$
Then the decomposition of $G_1 \oplus G_2$ is
$$G_1 \oplus G_2 = \(H_1 \oplus H_2\) \oplus \(\bigoplus_{k=1}^{n_1} \langle x_k \rangle \oplus \bigoplus_{k=1}^{n_2} \langle y_k \rangle\).$$
\end{pro}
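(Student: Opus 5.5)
The plan is to compute the set $\mathcal{S}_{G_1 \oplus G_2}$ explicitly and show that it equals the disjoint union $\{(x_k,0)\} \cup \{(0,y_k)\}$. The uniqueness part of Lemma \ref{decomp} will then identify the complementary subalgebra with $H_1 \oplus H_2$. Throughout, $G_1 \oplus G_2$ is the product algebra with componentwise multiplication, so $G_1 = G_1 \oplus 0$ and $G_2 = 0 \oplus G_2$ are two-sided ideals that annihilate each other.

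First, the elements $(x_k,0)$ and $(0,y_k)$ lie in $\mathcal{S}_{G_1\oplus G_2}$. Take $(x_k,0)$. It is a non-zero idempotent. For $(a,b) \in G_1 \oplus G_2$ we have $(a,b)(x_k,0) = (ax_k,0)$, and $ax_k \in K\langle x_k\rangle$ because $x_k \in \mathcal{S}_{G_1}$; the product on the other side is handled the same way. The case $(0,y_k)$ is symmetric.

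Conversely, let $z=(u,v) \in \mathcal{S}_{G_1\oplus G_2}$. Multiplying $z$ by $(1_{G_1},0)$ gives $(u,0) \in K\langle z\rangle$. If the algebras are not unital, use instead $(u,0)\,z = (u^2,0) = (u,0)$, which follows from $z^2=z$ and also shows $(u,0)\in K\langle z\rangle$. So $(u,0)=\lambda(u,v)$ for some $\lambda\in K$.
\begin{itemize}
\item If $u\neq 0$, then $\lambda=1$, so $v=0$.
\item If $u=0$, then $v\neq 0$.
\end{itemize}
Hence $z$ is supported in exactly one factor, say $z=(u,0)$. Then $u^2=u$, and for $a\in G_1$ the product $(a,0)z=(au,0)$ lies in $K\langle z\rangle$, so $au\in K\langle u\rangle$; likewise $ua\in K\langle u\rangle$. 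Therefore $u\in\mathcal{S}_{G_1}$, so $u$ is one of the $x_k$, and the same argument applies on the $G_2$ side. This establishes $\mathcal{S}_{G_1\oplus G_2}=\{(x_k,0)\}\cup\{(0,y_k)\}$.

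Finally, compute the complement $H$ attached to $G_1 \oplus G_2$ by Lemma \ref{decomp}. It is the intersection of the kernels of left multiplication by these idempotents. Since $(x_k,0)(a,b)=(x_ka,0)$, the condition coming from $G_1$ is exactly $a\in H_1$, and the condition coming from $G_2$ is $b\in H_2$. Thus $H=H_1\oplus H_2$, which gives the stated decomposition, and its uniqueness follows from Lemma \ref{decomp}. Finite-dimensionality of $G_1 \oplus G_2$, needed to apply the lemma, is inherited from $G_1$ and $G_2$.

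The only point that needs care is the step showing an element of $\mathcal{S}$ cannot have both components non-zero. The projection argument above handles it using only $z^2=z$, without assuming the algebras are unital.
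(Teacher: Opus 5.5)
Your proof is correct and uses the same key trick as the paper. Multiplying an element $z\in\mathcal{S}_{G_1\oplus G_2}$ by one of its own components and using $z^2=z$ forces $z$ into a single factor, and its nonzero component must then lie in $\mathcal{S}_{G_1}$ or $\mathcal{S}_{G_2}$. The paper, however, runs this argument only for elements of $H_1\oplus H_2$, to show that none of them lies in $\mathcal{S}_{G_1\oplus G_2}$. It leaves implicit the step that any other element of $\mathcal{S}_{G_1\oplus G_2}$ would be orthogonal to all $x_k,y_k$ and hence would lie in $H_1\oplus H_2$. You instead classify $\mathcal{S}_{G_1\oplus G_2}$ directly, which avoids that implicit step and is slightly cleaner.
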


\begin{proof}
First, for each $k$, we have $x_k, y_k \in \mathcal{S}_{G_1 \oplus G_2}$.
Second, the direct sum $H_1 \oplus H_2$ equals the intersection of kernels of the left multiplication by all $x_k$'s and $y_k$'s.
Third, the intersection $\(H_1 \oplus H_2\) \cap \mathcal{S}_{G_1 \oplus G_2}$ is empty.
Indeed, otherwise we would have $z_1 \in H_1$ and $z_2 \in H_2$ such that $z_1+z_2 \neq 0$, $(z_1+z_2)^2=z_1+z_2$ and for every $w_1 \in G_1$ and $w_2 \in G_2$ we would have
$$(z_1+z_2) (w_1+w_2) \in K\langle z_1+z_2 \rangle \textrm{ and } (w_1+w_2)(z_1+z_2) \in K\langle z_1+z_2 \rangle.$$
Assume for instance that $z_1 \neq 0$ (the case $z_2 \neq 0$ is identical).
We have $z_2^2=z_2$, and taking $w_1=0$ and $w_2=z_2$, we obtain $z_2=w_2z_2 \in K\langle z_1+z_2 \rangle$, which implies $z_2=0$.
Since $z_1 \notin \mathcal{S}_{G_1}$ and $z_1^2=z_1 \neq 0$, then there exists $w_1 \in G_1$ such that $z_1w_1 \notin K\langle z_1 \rangle$ or $w_1z_1 \notin K\langle z_1 \rangle$, which is a contradiction.
\end{proof}

\subsection{Integral structure}
\begin{dfn}
Let $X$ be a variety and $\epsilon \in \left\lbrace +,-,\emptyset \right\rbrace$.
We say that $G^\epsilon_{X,K}$ is \textit{integral} if the decomposition
$$G^\epsilon_{X,K} = H^\epsilon_{X,K} \oplus \bigoplus_{k=1}^{n_X} K\langle p^\epsilon_{X,k} \rangle$$
from Lemma \ref{decomp} is such that there exists a decomposition
$$T^\epsilon_X = U^\epsilon_X \oplus \bigoplus_{k=1}^{n_X} L^\epsilon_{X,k}$$
of free-$\ZZ$-modules with
$$\bigcap_{k=1}^{n_X} \ker(p^\epsilon_{X,k}) = U^\epsilon_X \otimes_\ZZ K \textrm{ and } \mathrm{Im} (p^\epsilon_{X,k}) = L^\epsilon_{X,k} \otimes_\ZZ K$$
for each $1 \leq k \leq n_X$.
Note that this decomposition into free-$\ZZ$-modules is unique.
\end{dfn}

\begin{pro}
Let $Y$ be a fourfold, $Z \subset Y$ be a subvariety, and $X$ be the blow-up of $Y$ at $Z$.
Then $X$ has no degree-two transcendantal classes if and only if $Y$ has no degree-two transcendantal classes.
Furthermore, in that case, $G^+_{X,K}$ is integral if and only if $G^+_{Y,K}$ is integral.
\end{pro}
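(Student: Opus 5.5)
The plan has two steps: first show, via the classical decomposition \eqref{class}, that degree-two transcendental classes are unchanged by the blow-up; then apply Corollary \ref{cordec} with $\epsilon=+$ and compare the algebra decompositions of Lemma \ref{decomp} on both sides.

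\emph{Degree-two classes.} By \eqref{class} we have
$$H^2(X,\ZZ)\simeq H^2(Y,\ZZ)\oplus H^0(Z,\ZZ)(-1)$$
as $\ZZ$-Hodge structures. The summand $H^0(Z)(-1)$ is spanned by the classes of the components of $E$, which are Hodge classes. The Hodge/transcendental splitting is preserved by the maps $\mathrm{Bl}^*$ and $j_*\circ(\xi^{k-1}\cup)\circ\mathrm{Bl}_{|E}^*$ (by the lemma above on pull-backs, push-forwards and cup products with Hodge classes). Hence the degree-two part of $T_{X,\QQ}$ is the image of the degree-two part of $T_{Y,\QQ}$, and one vanishes if and only if the other does. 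This gives the first assertion.

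\emph{Setting up the blow-up formula.} Assume now that neither $X$ nor $Y$ has degree-two transcendental classes. For a fourfold, $T^+$ then lies in $H^4$. I would check the hypothesis of Corollary \ref{cordec} for $\epsilon=+$ case by case in $\dim Z$, using $r=4-\dim Z\geq 2$:
\begin{itemize}
\item If $Z$ is a point, there are no $Z$-blocks.
\item If $Z$ is a curve, $T^+_Z=0$, since $H^0$ and $H^2$ of a curve are algebraic.
\item If $Z$ is a surface ($r=2$), $T^+_Z$ lies in $H^2(Z)$, its middle cohomology.
\end{itemize}
The corollary then gives $G^+_{X,K}=G^+_{Y,K}\oplus \Phi\circ\bigl(\bigoplus_k \widehat G^+_{Z,K}\bigr)\circ\Phi^{-1}$. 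In the only nontrivial case (surface, $r=2$) this reads $G^+_{X,K}=G^+_{Y,K}\oplus\widehat G^+_{Z,K}$, and by Lemma \ref{surface} we have $\widehat G^+_{Z,K}=K\cdot\id_{T^+_{Z,K}}$ whenever $T^+_Z\neq 0$.

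\emph{Comparing decompositions.} Applying Proposition \ref{sum}, the decomposition of $G^+_{X,K}$ is that of $G^+_{Y,K}$ with one extra projector $p=\id_{T^+_{Z,K}(-1)}$ adjoined; this projector does lie in $\mathcal{S}$, because the $Z$-factor is one-dimensional. Its image $T^+_{Z,K}(-1)$ and the kernel of $p$ are defined over $\ZZ$, since \eqref{class} gives
$$T^+_X=T^+_Y\oplus T^+_Z(-1)$$
as free $\ZZ$-modules. The integrality step then runs in both directions.
\begin{itemize}
\item Suppose $T^+_Y=U_Y\oplus\bigoplus_k L_{Y,k}$ realizes integrality of $G^+_{Y,K}$. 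Then $T^+_X=U_Y\oplus\bigoplus_k L_{Y,k}\oplus T^+_Z(-1)$ realizes integrality of $G^+_{X,K}$.
\item Conversely, suppose a $\ZZ$-decomposition of $T^+_X$ realizes integrality of $G^+_{X,K}$. The lattice $L$ attached to $p$ must be $T^+_X\cap T^+_{Z,K}(-1)=T^+_Z(-1)$. The remaining lattices all lie in $\ker p\cap T^+_X=T^+_Y$ and give an integral splitting of $T^+_Y$ compatible with $G^+_{Y,K}$.
\end{itemize}

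\emph{Main obstacle.} The delicate step is the last one: one must ensure that the kernels and images computed inside $T^+_{X,K}$ agree with those computed inside $T^+_{Y,K}$. In particular, $H_Y$-elements extended by zero on the $Z$-block must annihilate exactly the right subspace, and $\bigcap\ker$ must be taken blockwise. I would handle this by writing every element of $G^+_{X,K}$ as block-diagonal via Proposition \ref{sum}. One more point needs attention when $T^+_Z=0$ (or $Z$ is a point or curve): then the two algebras coincide and $T^+_X=T^+_Y$, so nothing further is needed.
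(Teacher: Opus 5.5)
Your proposal is correct and follows the paper's own proof. The degree-two statement comes from Equation~\eqref{class}, and the integrality statement from Corollary~\ref{cordec} with $\epsilon=+$, Lemma~\ref{surface} and Proposition~\ref{sum}, which together give $G^+_{X,K}=G^+_{Y,K}\oplus K\,p^+_Z$, $H^+_{X,K}=H^+_{Y,K}$ and $\mathcal{S}_{G^+_{X,K}}=\mathcal{S}_{G^+_{Y,K}}\cup\{p^+_Z\}$, compatibly with $T^+_X=T^+_Y\oplus T^+_Z(-1)$. Your case split on $\dim Z$ and your explicit two-directional lattice check (using block-diagonality to place the remaining lattices in $T^+_Y$) just spell out what the paper compresses into ``all these decompositions are compatible with Equation~\eqref{class} tensored with $K$''.
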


\begin{proof}
The result on transcendantal classes follows from Equation \eqref{class}.

By Corollary \ref{cordec}, Lemma \ref{surface}, and Proposition \ref{sum}, we have
\begin{eqnarray*}
G^+_{X,K} & = & G^+_{Y,K} \oplus G^+_{Z,K}, \\
H^+_{X,K} & = & H^+_{Y,K}, \\
\mathcal{S}_{G^+_{X,K}} & = & \mathcal{S}_{G^+_{Y,K}} \cup \left\lbrace p^+_Z \right\rbrace,
\end{eqnarray*}
where $p^+_Z$ is the projector to $T^+_{Z,K}$ (if non-zero).
Furthermore, all these decompositions are compatible with Equation \eqref{class} tensored with $K$.
Consequently, $G^+_{X,K}$ is integral if and only if $G^+_{Y,K}$ is integral.
\end{proof}

Let $X$ and $Y$ be two birational fourfolds.
We assume that $Y$ has no degree-two transcendantal classes and $G^+_{Y,K}$ is integral.
Then the same holds for $X$.
Considering a weak factorization
$$X=:X_0 \leftrightarrow X_1 \leftrightarrow \dotsc \leftrightarrow X_q := Y,$$
with blow-up centers $Z_1, \dotsc, Z_q$, we obtain a natural isomorphism of $\ZZ$-Hodge structures
$$T^+_X \oplus \bigoplus_{k \in I_\leftarrow} T^+_{Z_k}(-1) \simeq T^+_Y \oplus \bigoplus_{k \in I_\rightarrow} T^+_{Z_k}(-1),$$
where $I_\leftarrow$ (resp.~$I_\rightarrow$) denotes the set of indices $j$ such that the map is in the direction $Z_{j-1} \leftarrow X_j$ (resp.~$Z_{j-1} \rightarrow X_j$).
Note that we can disregard the blow-up centers that are not surfaces, since $T^+_{Z_k}$ would then vanish.
Therefore, the codimension of the relevant centers is precisely $2$, so that the isomorphism is in fact an isometry.
Furthermore, we obtain the equality of $K$-algebras
$$G^+_{X,K} \oplus \bigoplus_{k \in I_\leftarrow} G^+_{Z_k,K} = G^+_{Y,K} \oplus \bigoplus_{k \in I_\rightarrow} G^+_{Z_k,K},$$
which is compatible with the previous decomposition and is integral.
Recall that $G^+_{Z_k,K} = K \langle p^+_{Z_k} \rangle$ is generated by the projector to $T^+_{Z_k,K}$.
Consequently, using the decomposition of Lemma \ref{decomp}, we obtain
\begin{eqnarray*}
H^+_{X,K} & = & H^+_{Y,K}, \\
\mathcal{S}_{G^+_{X,K}} \cup \left\lbrace p^+_{Z_k} ~|~~k \in I_\leftarrow \right\rbrace & = & \mathcal{S}_{G^+_{Y,K}} \cup \left\lbrace p^+_{Z_k} ~|~~k \in I_\rightarrow \right\rbrace.
\end{eqnarray*}
Since $G^+_{X,K}$ and $G^+_{Y,K}$ are integral, then we write the isometry
$$U^+_X \oplus \bigoplus_{k=1}^{n_X} L^+_{X,k} \oplus \bigoplus_{k \in I_\leftarrow} T^+_{Z_k}(-1) \simeq U^+_Y \oplus \bigoplus_{k=1}^{n_Y} L^+_{Y,k} \oplus \bigoplus_{k \in I_\rightarrow} T^+_{Z_k}(-1),$$
and this decomposition is unique.
Therefore, we obtain $U^+_X \simeq U^+_Y$ and the sets
$$\left\lbrace L^+_{X,k} ~|~~1 \leq k \leq n_X \right\rbrace \cup \left\lbrace T^+_{Z_k}(-1) ~|~~k \in I_\leftarrow\right\rbrace$$
and
$$\left\lbrace L^+_{Y,k} ~|~~1 \leq k \leq n_Y \right\rbrace \cup \left\lbrace T^+_{Z_k}(-1) ~|~~k \in I_\rightarrow\right\rbrace$$
have isometric elements.
In particular, we deduce the following.

\begin{pro}\label{rat}
Let $X$ be a rational fourfold.
Then $G^+_{X,\QQ}=\QQ^{\oplus n_X}$ for some integer $n_X$, and in every weak factorization from $X$ to $\PP^4$, there exist $n_X$ blow-up center surfaces $Z_1, \dotsc, Z_{n_X}$ such that
$$T^+_X \simeq T^+_{Z_1}(-1) \oplus \dotsb \oplus T^+_{Z_{n_X}}(-1)$$
as an isometry of $\ZZ$-Hodge structures.
\end{pro}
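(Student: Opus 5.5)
We apply the discussion preceding the statement with $Y=\PP^4$. First we check the hypotheses there. The cohomology of $\PP^4$ is entirely algebraic, so $T_{\PP^4}=0$. In particular $\PP^4$ has no degree-two transcendental classes, $G^+_{\PP^4,K}=0$, and $G^+_{\PP^4,K}$ is integral trivially: $n_{\PP^4}=0$ and $U^+_{\PP^4}=0$.

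Iterating the previous proposition along any weak factorization $X=X_0\leftrightarrow\dotsb\leftrightarrow X_q=\PP^4$ (available since $X$ is rational), each intermediate $X_j$ has no degree-two transcendental classes and has integral $G^+_{X_j,K}$. Hence the displayed identities apply and give
$$H^+_{X,K}=H^+_{\PP^4,K}=0, \qquad \mathcal{S}_{G^+_{X,K}}\cup\{p^+_{Z_k}\mid k\in I_\leftarrow\}=\{p^+_{Z_k}\mid k\in I_\rightarrow\}.$$
By Lemma \ref{decomp}, $G^+_{X,K}=\bigoplus_{k=1}^{n_X}K\langle p^+_{X,k}\rangle\simeq K^{\oplus n_X}$. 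To descend to $\QQ$, I would note that $G^+_{X,\QQ}\otimes K=G^+_{X,K}$. A finite-dimensional $\QQ$-algebra whose scalar extension is $K^{n}$ is commutative and étale. Moreover, its idempotents $p^+_{X,k}$ are the projectors onto the sublattices $L^+_{X,k}\otimes K$, which are defined over $\ZZ$, so these idempotents are rational. Hence $G^+_{X,\QQ}=\QQ^{\oplus n_X}$. This descent, showing that the $p^+_{X,k}$ lie in $G^+_{X,\QQ}$ and not only in $G^+_{X,K}$, is the point requiring most care. I would argue it by Galois invariance: $\mathcal{S}_{G}$ is canonically defined, so it is stable under $\mathrm{Gal}(K/\QQ)$. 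Each element is the projector attached to an integral decomposition, so it has rational matrix entries and is therefore fixed.

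Next, the isometry. The unique integral decomposition yields $U^+_X\simeq U^+_{\PP^4}=0$. The set $\{L^+_{X,k}\}\cup\{T^+_{Z_k}(-1)\mid k\in I_\leftarrow\}$ has isometric elements with $\{T^+_{Z_k}(-1)\mid k\in I_\rightarrow\}$, where, as noted, only surface centers contribute. Matching the projectors via the equality of sets $\mathcal{S}$, each $p^+_{X,k}$ equals some $p^+_{Z_{j}}$ with $j\in I_\rightarrow$. The remaining centers with $j\in I_\rightarrow$ cancel against centers with $j\in I_\leftarrow$.

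So we choose the $n_X$ indices $j\in I_\rightarrow$ corresponding to the $p^+_{X,k}$. Then $L^+_{X,k}\simeq T^+_{Z_{j_k}}(-1)$ isometrically as $\ZZ$-Hodge structures, the isometry being the one induced by the blow-up decomposition \eqref{class} with $r=2$. Finally, $T^+_X=U^+_X\oplus\bigoplus_k L^+_{X,k}=\bigoplus_k L^+_{X,k}$, and this decomposition is orthogonal: the pieces are images of commuting orthogonal projectors, being compatible with the isometric decomposition of $T^+_X\oplus\bigoplus T^+_{Z_k}(-1)$. This gives the claimed isometry $T^+_X\simeq\bigoplus_{k=1}^{n_X}T^+_{Z_k}(-1)$, valid for every weak factorization.
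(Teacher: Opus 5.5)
Your proposal is correct and follows the paper's route: the paper obtains this proposition by specializing the preceding discussion to $Y=\PP^4$. There $T_{\PP^4}=0$ forces $H^+_{X,K}=0$ and $U^+_X=0$, and it matches each $p^+_{X,k}$ with a projector $p^+_{Z_j}$, $j\in I_\rightarrow$, so that $L^+_{X,k}\simeq T^+_{Z_j}(-1)$. Your extra steps fill in details the paper leaves implicit: descending the idempotents from $K$ to $\QQ$ via the integral decomposition, and checking that the $L^+_{X,k}$ are mutually orthogonal.
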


\subsection{Relation to Hodge atoms}
Recall that a Hodge atom is a generalized eigenspace of the endomorphism $\ev(\kappa_\tau)$ for a general evaluation map, see \cite{JG}.
By Lemmas \ref{lem1} and \ref{lem2} and by Proposition \ref{initial}, the atom construction is compatible with the discussion right before Proposition \ref{rat}.
This means in particular that $T^+_X$ and each $T^+_{Z_k}(-1)$ for $k \in I_{\leftarrow}$ all belong to distinct Hodge atoms, and so do $T^+_Y$ and each $T^+_{Z_k}(-1)$ for $k \in I_{\rightarrow}$.
\begin{pro}\label{rat2}
Let $X$ be a rational fourfold.
Then for each Hodge atom $E_{\ev,\alpha}$ of $X$, in every weak factorization from $X$ to $\PP^4$, there exists a unique blow-up center surface $Z$ whose minimal model $\Sigma$ satisfies
$$T^+_X \cap E_{\ev,\alpha} \simeq T^+_\Sigma(-1)$$
as an isometry of $\ZZ$-Hodge structures.
\end{pro}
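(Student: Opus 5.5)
Take a weak factorization $X=X_0 \leftrightarrow X_1 \leftrightarrow \dotsb \leftrightarrow X_q=\PP^4$ and refine the discussion preceding Proposition \ref{rat} atom by atom. $\PP^4$ has no transcendental classes, so $T^+_{\PP^4}=0$ and $G^+_{\PP^4,K}=0$. At each step $X_{j-1}\leftrightarrow X_j$ with a surface center $Z_j$ of codimension $2$, Proposition \ref{initial} applies: $T^+_{X_j,\QQ}$ lies in $H^4$, $T^+_{Z_j,\QQ}$ lies in $H^2$, and all transcendental classes of the intermediate fourfolds stay in degree four by Equation \eqref{class}, since $\PP^4$ has none in degree two. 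It gives $\Phi^\circ=\id\oplus \ci\cdot\id$. Lemmas \ref{lem1} and \ref{lem2} then give $\Phi=\Phi^\circ$, so the conjugation relation of Theorem \ref{Bl} becomes
$$\kappa^{X_{j-1},\mathrm{trans}}_{g(\tau)}\big|_{T^+}=\kappa^{X_j,\mathrm{trans}}_\tau\big|_{T^+}\oplus \kappa^{Z_j,\mathrm{trans}}_\tau\big|_{T^+}$$
along the classical decomposition (or the reverse for the other direction). By Lemma \ref{surface}, the second summand is a scalar $\lambda_j$ times the identity on $T^+_{Z_j}(-1)$. This scalar is a series in the variables; after a general evaluation $\ev$ it is a number.

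Next, evaluate. A Hodge atom is a generalized eigenspace of $\ev(\kappa_\tau)$ for general $\ev$. By the displayed block relation, the generalized eigenspaces of $\ev(\kappa^{X_{j-1}})$ on $T^+$ are those of $\ev(\kappa^{X_j})$ on $T^+_{X_j}$, together with $T^+_{Z_j}(-1)$ in the eigenvalue $\ev(\lambda_j)$. The change of variables $g$ only reparametrizes $\tau$, so "general evaluation" is preserved. Inducting from $\PP^4$ back to $X$, every piece $T^+_X\cap E_{\ev,\alpha}$ is identified with the sum of the $T^+_{Z_k}(-1)$ over those centers (in both directions, after cancellation as before Proposition \ref{rat}) whose eigenvalues equal $\alpha$. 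These identifications are integral and isometric, because the classical decomposition is an isometry of $\ZZ$-Hodge structures when $r=2$.

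Then reduce to the statement. Replace each center $Z$ by its minimal model $\Sigma$: blow-ups of points do not change $T^+$, and the remark after Theorem \ref{Bl} gives $G^+_{Z}=G^+_\Sigma$, so $T^+_Z\simeq T^+_\Sigma$ isometrically. Uniqueness of the center comes from the paragraph preceding this proposition: the $T^+_{Z_k}(-1)$ lie in pairwise distinct Hodge atoms, because the eigenvalues $\ev(\lambda_k)$ are pairwise distinct for general $\ev$. Hence each atom meeting $T^+_X$ nontrivially receives exactly one surviving center. Atoms meeting $T^+_X$ trivially give $0\simeq T^+_\Sigma(-1)$ for a center with $T^+=0$, or the statement is read for nonzero intersections.

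The main obstacle is the distinctness of eigenvalues across different centers and the cancellation between $I_\leftarrow$ and $I_\rightarrow$. One must check that $\lambda_k$, essentially $\Eu$ acting on $Z_k$ shifted by the $\mathfrak{q}$-term, gives, for general $\ev$, different values for different centers. It must also equal the same value when a center is blown up and later blown down, so that the matched pairs cancel inside a single atom. I would attack this using the explicit dominant term $-(r-1)\lambda_j$ in $D_j$ from \cite{Iritani}, together with the fact that $g$ depends on the step; if that fails, fall back to the integral-decomposition uniqueness argument already given before Proposition \ref{rat}.
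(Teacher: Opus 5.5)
Your skeleton matches the paper's: weak factorization to $\PP^4$, Proposition \ref{initial} with Lemmas \ref{lem1} and \ref{lem2} giving $\Phi=\id\oplus\ci\cdot\id$, the isometry of the classical decomposition for $r=2$, and passing to the minimal model because point blow-ups do not change $T^+$. What is missing is the step that turns this into a statement about Hodge atoms.

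Your uniqueness claim and your identification of $T^+_X\cap E_{\ev,\alpha}$ both rest on the assertion that the scalars $\ev(\lambda_k)$ are pairwise distinct for general $\ev$. You leave this open, and it cannot follow from the displayed block relation together with Lemma \ref{surface}. The computation in that lemma shows that on $T^+_Z\subset H^2(Z)$ only the term with $\beta=0$, $\alpha_0=\un$ survives: the term $c_1(Z)\cup u$ is a Hodge class in $H^4(Z)$, and $\int_\beta u=0$. Hence $\kappa^{Z,\mathrm{trans}}_\tau|_{T^+_Z}=T_0\cdot\id$ for every center, and $T_0\cdot\id$ is also a summand of every other block. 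So nothing in your argument separates one center's block from another, or from $T^+_{X_j}$. For the same reason, two further steps are unsupported: the coincidence of eigenvalues you need for a center blown up and later blown down, and the transport of a single ``general evaluation'' through the successive formal changes of variables $g$.

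The paper does not track eigenvalues step by step. It says the atom construction is compatible with the algebra-level decomposition established just before Proposition \ref{rat}. There, Corollary \ref{cordec} (with $r=2$) makes each $G^+_{Z_k,K}=K\,p^+_{Z_k}$ a direct algebra summand. Since $T^+_{\PP^4}=0$, Lemma \ref{decomp}, Proposition \ref{sum} and the uniqueness of the integral decomposition give
$G^+_{X,K}=\bigoplus_{k=1}^{n_X}K\,p^+_{X,k}$, with $\sum_k p^+_{X,k}=\id_{T^+_X}$ and $\mathrm{Im}(p^+_{X,k})=L^+_{X,k}\otimes K$. Here $L^+_{X,k}\simeq T^+_{Z_j}(-1)$ for exactly one $j\in I_\rightarrow$. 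The uniqueness of the integral decomposition is exactly where the $I_\leftarrow$/$I_\rightarrow$ cancellation takes place.

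The link to atoms is then a separation argument intrinsic to $X$, which is the step you need. Write $\kappa^{X,\mathrm{trans}}_\tau|_{T^+_X}=\sum_k f_k\,p^+_{X,k}$ with scalar series $f_k$. If $f_k=f_{k'}$ for some $k\neq k'$, every coefficient would lie in the proper subalgebra $\{\sum_i b_i\,p^+_{X,i} : b_k=b_{k'}\}$. That subalgebra does not contain $p^+_{X,k}$, contradicting the fact that these coefficients generate $G^+_{X,K}$. Hence, for general $\ev$ (recall that $\kappa_\tau$ preserves $T^+_X$), each nonzero $T^+_X\cap E_{\ev,\alpha}$ is a single $L^+_{X,k}\otimes\CC$, and the integral isometry with $T^+_\Sigma(-1)$ follows.

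Your proposed fallback to the integral uniqueness argument points in the right direction. Without this separation step, however, it says nothing about Hodge atoms.
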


We can then refine \cite[Theorem 63]{JG}, \cite[Theorem 13]{GM}, and \cite[Theorem 1]{Verra}.
\begin{thm}\label{cubic}
Let $X$ be a rational cubic, Gushel--Muka\"i, or Verra fourfold.
Then there exists a projective K3 surface $\Sigma$ and an isometry of integral Hodge structures
$$H^4(X,\ZZ)_\mathrm{van} \simeq H^2(\Sigma,\ZZ)(-1).$$
\end{thm}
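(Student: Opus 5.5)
The plan is to combine the rational result \cite[Theorem 63]{JG} (and its analogues \cite[Theorem 13]{GM}, \cite[Theorem 1]{Verra}) with the integral refinement in Proposition \ref{rat2}. The first step is to check that $X$ satisfies the hypotheses used before Proposition \ref{rat}. For cubic, Gushel--Mukai and Verra fourfolds, $H^2(X,\ZZ)$ and $H^6(X,\ZZ)$ are algebraic and the odd cohomology vanishes. Hence $X$ has no degree-two transcendental classes, and $T_X=T_X^+$ is contained in $H^4(X,\ZZ)_\mathrm{van}$. Since $X$ is rational, Proposition \ref{rat} gives $G^+_{X,\QQ}=\QQ^{\oplus n_X}$, and integrality of $G^+_{X,K}$ is inherited from $\PP^4$, for which $T^+=0$.

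Next we identify the relevant Hodge atom. From \cite{JG}, the vanishing cohomology of these fourfolds carries the transcendental part inside a single Hodge atom, which is the one corresponding to the Kuznetsov component. This was the key input of \cite[Theorem 63]{JG}. Proposition \ref{rat2}, applied to this atom $E_{\ev,\alpha}$ and to a weak factorization $X\dashrightarrow\PP^4$, produces a blow-up centre surface $Z$ with minimal model $\Sigma$ and an isometry of $\ZZ$-Hodge structures
$$T_X = T^+_X\cap E_{\ev,\alpha} \simeq T^+_\Sigma(-1).$$
By the rational statement, or by comparing Hodge numbers, $T_{X,\CC}$ contains $H^{3,1}(X)$, which is one-dimensional. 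Therefore $T_\Sigma$ contains a one-dimensional $H^{2,0}$, so $p_g(\Sigma)=1$. Because $\Sigma$ is a minimal surface occurring as a centre in a factorization of a rational fourfold, we then argue as in \cite{JG} that it is a K3 surface: the Hodge structure $T_\Sigma$ is of K3 type, so $\Sigma$ is not an elliptic or general type surface with $p_g=1$ in a way that obstructs this. Here the goal is mainly to inherit the conclusion of \cite[Theorem 63]{JG} that $\Sigma$ can be taken to be K3.

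The final step, and the main obstacle, is to pass from the isometry of transcendental lattices $T_X\simeq T_\Sigma(-1)$ to an isometry of the full lattices $H^4(X,\ZZ)_\mathrm{van}\simeq H^2(\Sigma,\ZZ)(-1)$. My first approach is lattice-theoretic. $H^4(X,\ZZ)_\mathrm{van}$ is an even lattice of signature $(20,2)$ for cubics, with the analogous signature in the other cases, while $H^2(\Sigma,\ZZ)(-1)$ is $-\Lambda_{K3}$, which is even unimodular of signature $(3,19)$ up to sign. Both contain the isometric primitive sublattices $T_X$ and $T_\Sigma(-1)$, with transcendental complements. I would then use the twisted embedding and Nikulin's theory as in Hassett's criterion \cite{Hassett}: an isometric primitive embedding of $T_X$ into $\Lambda_{K3}(-1)$ compatible with the Hodge structure realizes $X$ as having an associated K3. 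By the surjectivity of the period map, together with Torelli, one obtains a K3 surface $\Sigma'$, possibly replacing $\Sigma$, such that the Hodge isometry extends over the algebraic parts. The delicate points are, first, that the statement asks for $H^4_\mathrm{van}$ rather than the orthogonal of a labelling, so the rank and discriminant mismatch must be resolved, presumably by interpreting the isometry via Hassett's $\langle h^2,T\rangle^\perp$ convention, and second, that we must check the embedding $T_X\hookrightarrow\Lambda_{K3}(-1)$ is unique, or at least extends.
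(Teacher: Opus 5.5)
Your reduction to transcendental lattices is the paper's argument, and the paper's proof consists of exactly your two inputs. The first is the rational statements of \cite{JG,GM,Verra}. That is where the K3 comes from: your $p_g(\Sigma)=1$ discussion does not by itself give a K3, but you do not need it, only that the K3 produced there is the blow-up centre singled out by the uniqueness in Proposition~\ref{rat2}. The second is Proposition~\ref{rat2} applied to the atom containing $T_X$, which gives $T_X\simeq T_\Sigma(-1)$. The genuine gap is the final step you flag. No Nikulin or Torelli argument can close it, because the asserted isometry $H^4(X,\ZZ)_{\mathrm{van}}\simeq H^2(\Sigma,\ZZ)(-1)$ does not exist, and you have already written down the obstruction yourself.

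For a cubic fourfold, $H^4(X,\ZZ)_{\mathrm{van}}=\langle h^2\rangle^\perp$ has signature $(20,2)$, while $H^2(\Sigma,\ZZ)(-1)$ has signature $(19,3)$ or $(3,19)$, depending on the sign convention. Moreover $h^2$ is primitive with $(h^2)^2=3$ in the unimodular lattice $H^4(X,\ZZ)$, so the vanishing lattice has discriminant group $\ZZ/3\ZZ$, whereas $H^2(\Sigma,\ZZ)$ is unimodular. For Gushel--Muka\"i fourfolds the vanishing lattice likewise has signature $(20,2)$ and discriminant group $(\ZZ/2\ZZ)^2$. For Verra fourfolds $b_4=24$, so $H^4(X,\ZZ)_{\mathrm{van}}=A^\perp$ has rank $21$, not $22$; this is the rank used in Corollary~\ref{Verra}. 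Signature is an invariant over $\mathbb{R}$, so there is not even a rational isometry. The $\QQ$-statement you intend to inherit can therefore only concern transcendental parts, or be an isomorphism of Hodge structures that is not an isometry.

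What the argument proves, and all that Corollary~\ref{Verra} uses, is the Hodge isometry $T_X\simeq T_\Sigma(-1)$. Any integral statement involving more than $T_X$ has to be of Hassett type: $K^\perp\simeq H^2(\Sigma,\ZZ)_{\mathrm{prim}}(-1)$, for a suitable labelling $K$ of algebraic classes and a polarization of $\Sigma$. That needs input Proposition~\ref{rat2} does not provide: matching the algebraic parts, and compatibility of the gluing of $T_X$ and $T_\Sigma(-1)$ along their discriminant groups. Replacing $\Sigma$ by some $\Sigma'$ via surjectivity of the period map, as you suggest, would also break the link with the blow-up centre. The paper's one-line proof does not address this step either; it stops at the transcendental isometry.
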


\begin{proof}
By the cited theorems, the result holds with $\QQ$-coefficients, and by Proposition \ref{rat2}, it holds for the transcendantal lattice.
\end{proof}

As a consequence, we find the following result, refining \cite[Theorem 6.4]{Fay}.

\begin{cor}\label{Verra}
A very general Verra fourfold is irrational.
\end{cor}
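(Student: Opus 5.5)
The plan is to argue by contraposition, following the strategy \cite{Fay} uses for cubic fourfolds in Hassett divisors. We combine Theorem \ref{cubic} with a lattice-theoretic obstruction: for a very general Verra fourfold $X$, the vanishing lattice $H^4(X,\ZZ)_\mathrm{van}$ admits no isometry of integral Hodge structures with $H^2(\Sigma,\ZZ)(-1)$ for any projective K3 surface $\Sigma$. Assuming $X$ rational, Theorem \ref{cubic} produces such a $\Sigma$ and an isometry, which is the contradiction.

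The first step is to make the objects of Theorem \ref{cubic} concrete for a very general Verra fourfold. Here the Hodge locus in $H^4(X,\QQ)$ is spanned by the classes coming from the ambient $\PP^2\times\PP^2$, so $T_X^+ = H^4(X,\ZZ)_\mathrm{van}$ exactly. This lattice has rank $20$ with Hodge numbers $(1,18,1)$ and a known signature and discriminant group, which we take from \cite{Fay} and \cite{Verra}. Also $X$ has no degree-two transcendental classes, so the results of the previous subsection, in particular Propositions \ref{rat} and \ref{rat2}, apply.

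The second step is the obstruction itself. Suppose $H^4(X,\ZZ)_\mathrm{van}\simeq H^2(\Sigma,\ZZ)(-1)$ as integral Hodge structures. Since $X$ is very general, the Mumford--Tate group of $H^4(X,\QQ)_\mathrm{van}$ is the full orthogonal group, so this Hodge structure has no nonzero Hodge classes. Hence $H^2(\Sigma,\ZZ)$ would have no algebraic classes, so $\rho(\Sigma)=0$, which contradicts the projectivity of $\Sigma$.

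We must be careful about two points. First, Theorem \ref{cubic} must really give an isometry of the whole vanishing lattice with the whole $H^2(\Sigma,\ZZ)(-1)$. If it only gave a primitive embedding with transcendental parts matching, we would instead use Proposition \ref{rat2} directly: $T_X^+$ would be isometric to $T_\Sigma^+(-1)$. Then we compare discriminant forms or ranks: $T_\Sigma$ has rank at most $21$ and signature $(2,\rho'-2)$ after the Tate twist, while $H^4(X,\ZZ)_\mathrm{van}$ is a specific even lattice of rank $20$. Second, if signature and rank do not already contradict, we use the discriminant. A rank-$20$ transcendental lattice of a K3 surface with Picard number $2$ has discriminant group of length at most $2$, and we would check that the known discriminant group of $H^4(X,\ZZ)_\mathrm{van}$ for Verra fourfolds is incompatible with this. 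This is the integral refinement that \cite{Fay} lacked, since it only had $\QQ$-coefficients.

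I expect the main obstacle to be this lattice-theoretic comparison: pinning down the discriminant form of $H^4(X,\ZZ)_\mathrm{van}$ for Verra fourfolds and showing it cannot be realised as a twisted transcendental K3 lattice. I would first try the Picard-number-zero argument above, because it needs only very generality. I would fall back on the discriminant computation only if the precise output of Theorem \ref{cubic} forces it.
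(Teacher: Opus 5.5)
There is a genuine gap: the argument you would lead with is invalid, and the fallback that does work is left uncomputed and set up with the wrong numbers.

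\textbf{The Picard-number-zero step does not work.} It reads Theorem \ref{cubic} as an isometry of all of $H^4(X,\ZZ)_\mathrm{van}$ with all of $H^2(\Sigma,\ZZ)(-1)$. For a Verra fourfold no such isometry can exist, whatever $X$ is:
\begin{itemize}
\item One has $b_4(X)=24$, so $H^4(X,\ZZ)_\mathrm{van}=\langle h_1^2,h_1h_2,h_2^2\rangle^\perp$ has rank $21$ (not $20$), Hodge numbers $(1,19,1)$ and signature $(19,2)$.
\item By contrast, $H^2(\Sigma,\ZZ)(-1)$ has rank $22$ and signature $(19,3)$.
\end{itemize}
So the contradiction you derive comes from an unsatisfiable literal reading of the theorem, not from very generality. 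What is actually established, and what the paper's proof uses, is an isometry of transcendental lattices $T_X\simeq T_\Sigma(-1)$; see the proof of Theorem \ref{cubic}, via Proposition \ref{rat2}. Since $T_\Sigma$ contains no Hodge classes by definition, the absence of Hodge classes on $T_X$ gives no contradiction, and the Mumford--Tate input is beside the point.

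\textbf{Your fallback is the right route, and it is exactly the paper's proof.} The decisive computation is what is missing. The lattice $A=\ZZ\langle h_1^2,h_1h_2,h_2^2\rangle$ has Gram matrix
\[
\begin{bmatrix} 0 & 0 & 2 \\ 0 & 2 & 0 \\ 2 & 0 & 0 \end{bmatrix},
\]
so $A\simeq U(2)\oplus\langle 2\rangle$, with discriminant group $(\ZZ/2\ZZ)^3$. For very general $X$ one has $T_X=A^\perp$. Since $A$ is saturated in the unimodular lattice $H^4(X,\ZZ)$, $T_X$ has the same discriminant group $(\ZZ/2\ZZ)^3$.

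On the K3 side, $\mathrm{rk}\,T_\Sigma=22-\rho(\Sigma)=21$ forces $\rho(\Sigma)=1$, not $2$. Then $T_\Sigma=h^\perp$ in the unimodular lattice $H^2(\Sigma,\ZZ)$, so its discriminant group is $\ZZ/2d\ZZ$ with $h^2=2d$. The twist by $(-1)$ does not change this group. A cyclic group cannot be isomorphic to $(\ZZ/2\ZZ)^3$, which gives the contradiction.

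Your general principle, that the discriminant group has length at most $\rho(\Sigma)$, is the right one. With the correct rank it becomes length at most $1$ against length $3$.
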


\begin{proof}
A Verra fourfold $X$ is a double cover $\pi \colon X \to \PP^2 \times \PP^2$ ramified at a divisor of bidegree $(2,2)$.
Let $A \subset H^4(X,\ZZ)$ be the sublattice obtained from the map $\pi$:
$$A = \ZZ \langle h_1^2, h_1 h_2, h_2^2 \rangle,$$
where $h_1$ and $h_2$ are the hyperplane classes of each $\PP^2$ pulled-back via $\pi$.
Its intersection matrix is
$$\begin{bmatrix}
0 & 0 & 2 \\
0 & 2 & 0 \\
2 & 0 & 0
\end{bmatrix},$$
so that its discriminant group is $\(\ZZ/2\ZZ\)^3$.
Furthermore, $A \subset H^4(X,\ZZ)$ is saturated, i.e.~the quotient has no torsion (this is true for every smooth Verra fourfold). 
Our assumption that $X$ is very general actually means that
$$T_X = A^\perp.$$
Since $H^4(X,\ZZ)$ is unimodular and $A$ is saturated, then $T_X$ has the same discriminant group $\(\ZZ/2\ZZ\)^3$ as $A$.

Assume that there is a K3 surface $\Sigma$ and an isometry
$$T_X \simeq T_\Sigma(-1).$$
Then the Picard number of $\Sigma$ is $1$ and we have $T_\Sigma = \mathrm{NS}(\Sigma)^\perp$ with $\mathrm{NS}(\Sigma) = \ZZ \langle h \rangle$ generated by an ample class.
The lattice $H^2(\Sigma,\ZZ)$ has no torsion and is unimodular, and the sub-lattice $\mathrm{NS}(\Sigma)$ is saturated.
Hence, the discriminant of $T_\Sigma$ equals the discriminant of $\mathrm{NS}(\Sigma)$, which is the cyclic group $\ZZ/2d\ZZ$ where $h^2=:2d$.
Since $\(\ZZ/2\ZZ\)^3$ is not cyclic, then we get a contradiction.
\end{proof}


\begin{bibdiv}
	\begin{biblist}

		\bib{Verra}{article}{
	author={Benedetti, Vladimiro},
	author={Gu\'er\'e, J\'er\'emy},
	author={Manivel, Laurent},
	author={Perrin, Nicolas},
	title={Quantum cohomology and birational geometry of Verra fourfolds},
	journal={arXiv 2605.30450},
	volume={},
	date={},
	number={},
	pages={},
}	

		\bib{GM}{article}{
	author={Benedetti, Vladimiro},
	author={Manivel, Laurent},
	author={Perrin, Nicolas},
	title={uantum cohomology and irrationality of Gushel-Mukai fourfolds},
	journal={arXiv:2603.17487},
	volume={},
	date={},
	number={},
	pages={},
}		
		
	
		\bib{Fay}{article}{
	author={Fay, Aideen},
	title={Equivariant irrationality of very general symmetric Verra fourfolds},
	journal={arXiv 2605.30439},
	volume={},
	date={},
	number={},
	pages={},
}

		\bib{Fay2}{article}{
	author={Fay, Aideen},
	title={Hasse obstructions to rationality for special fourfolds},
	journal={arXiv 2609.06759},
	volume={},
	date={},
	number={},
	pages={},
}

		
		\bib{JG}{article}{
	author={Gu\'er\'e, J\'er\'emy},
	title={On the irrationality of cubic fourfolds},
	journal={arXiv 2603.04518},
	volume={},
	date={},
	number={},
	pages={},
}

		\bib{Hassett}{article}{
			author={Hassett, Brendan},
			title={Special cubic fourfolds},
			journal={Compositio Mathematica},
			volume={120},
			date={2000},
			number={1},
			pages={1-23},
		}
		
		\bib{Iritani}{article}{
			author={Iritani, Hiroshi},
			title={Quantum cohomology of blow-ups},
			journal={to appear in Kyoto Journal of Mathematics, avalaible at arXiv 2307.13555},
			volume={},
			date={},
			number={},
			pages={},
		}
		

		
		\bib{KKPY}{article}{
			author={Katzarkov, Ludmil},
			author={Kontsevich, Maxim},
			author={Pantev, Tony},
			author={Yu Yue, Tony},
			title={Birational invariants from Hodge structures and quantum multiplication},
			journal={avalaible at arXiv 2508.05105},
			volume={},
			date={},
			number={},
			pages={},
		} 
		
		
		
		\bib{Kuznetsov}{article}{
			author={Kuznetsov, Alexander},
			title={Derived categories view on rationality problems},
			journal={Lecture Notes in Math.},
			volume={2172},
			date={2016},
			number={},
			pages={67-104},
		}

		\bib{Lin}{article}{
	author={Lin, Hsueh-Yung},
	author={Shinder, Evgeny},
	title={Motivic invariants of birational maps},
	journal={Annals of Mathematics},
	volume={199},
	date={2024},
	number={1},
	pages={445-478},
}
		
		\bib{OpenAI}{article}{
			author={OpenAI},
			title={Irrational cubic fourfolds with geometric K3 categories},
			journal={},
			volume={},
			date={},
			number={},
			pages={},
		}
		
	\end{biblist}
\end{bibdiv}
\end{document}